\documentclass[10pt]{amsart}\oddsidemargin -1mm \evensidemargin -1mm \topmargin -15mm\headheight5mm \headsep 3.5mm \textheight 245mm\textwidth 170mm 
\usepackage{amsfonts, amssymb, amsthm, amsmath, euscript}
\newtheorem{theorem}{Theorem}[section]\newtheorem{lemma}[theorem]{Lemma}\newtheorem{remark}[theorem]{Remark}\newtheorem{proposition}[theorem]{Proposition}\newtheorem{corollary}[theorem]{Corollary}\newtheorem*{Ex0}{Examples}
\DeclareMathOperator{\Ue}{\mathrm{U}}\DeclareMathOperator{\Sa}{\mathrm{S}}\DeclareMathOperator{\Var}{\mathrm{Var}}\DeclareMathOperator{\Ann}{\mathrm{Ann}}\DeclareMathOperator{\Irr}{\mathrm{Irr}}\DeclareMathOperator{\Hom}{\mathrm{Hom}}
\begin{document}
\title{Primitive ideals of $\Ue(\frak{sl}(\infty))$}
\author{Ivan Penkov, Alexey Petukhov}
\address{Ivan Penkov: Jacobs University Bremen, Campus Ring 1, D-28759, Bremen, Germany}
\email{i.penkov@jacobs-university.de}
\address{Alexey Petukhov: The University of Manchester, Oxford Road M13 9PL, Manchester, UK, on leave from the Institute for Information Transmission Problems, Bolshoy Karetniy 19-1, Moscow 127994, Russia}
\email{alex-{}-2@yandex.ru}
\maketitle

\begin{abstract}We provide an explicit description of the primitive ideals of the enveloping algebra $\Ue(\frak{sl}(\infty))$ of the infinite-dimensional finitary Lie algebra $\frak{sl}(\infty)$ over an uncountable algebraically closed field of characteristic 0. Our main new result is that any primitive ideal of $\Ue(\frak{sl}(\infty))$ is integrable. A classification of integrable primitive ideals of $\Ue(\frak{sl}(\infty))$ has been known previously, and relies on the pioneering work of A.~Zhilinskii.

{\bf Key words:} Primitive ideals, finitary Lie algebras, highest weight modules.

{\bf AMS subject classification.} Primary 17B10, 17B35, 17B65.
\end{abstract}

\section{Introduction} A two-sided ideal $I$ of an associative algebra $A$ is called {\it primitive} if $I$ is the annihilator of a simple $A$-module. Given an infinite-dimensional associative algebra $A$,
it may be too hard to classify simple $A$-modules (this problem seems to be open for the algebra of differential operators in two variables) but it may still  be possible to provide an explicit  description of the primitive ideals of $A$. This is precisely the situation when $A=\Ue(\frak g)$ is the enveloping algebra of a semisimple finite-dimensional Lie algebra over a field of characteristic 0. In this case, a rough description of primitive ideals is given by the celebrated Duflo Theorem~\cite{Du}: it claims that every primitive ideal of $A=\Ue(\frak g)$ is the annihilator of a simple highest weight module. This reduces the problem of classifying primitive ideals to a combinatorial problem. This latter problem has been solved due to the efforts of many mathematicians, in particular D.~Barbasch, D.~Vogan~\cite{BV1, BV2}, W.~Borho, J.-C.~Jantzen~\cite{BJ}, A.~Joseph~\cite{Jo}, D.~Kazhdan,~G.~Lusztig~\cite{Lu}, and others, see, for example,~\cite{Atl}. In~\cite{PP3} we have made an attempt to summarize the  combinatorial description of the primitive  ideals of $\Ue(\frak g)$ for classical simple Lie algebras $\frak g$ in a language suitable for studying the case when $\operatorname{rk}\frak g$ tends to $\infty$.

In the present paper we consider the Lie algebra $\frak{sl}(\infty)$ which consists of traceless finitary infinite matrices, i.e. traceless infinite  matrices each of which has only finitely many nonzero entries. Our main result is an explicit description of all primitive ideals of the enveloping algebra $\Ue(\frak{sl}(\infty))$. Very roughly, when passing from $\Ue(\frak{sl}(n))$ to $\Ue(\frak{sl}(\infty))$, the problem of classifying irreducible representations cannot become easier, while, as we show in this paper, the problem of classifying primitive ideals admits a beautifully simple answer. In contrast with the case of the finite-dimensional simple Lie  algebra, $\Ue(\frak{sl}(\infty))$ has only countably many primitive ideals. This is related to the circumstance that a generic irreducible highest weight $\Ue(\frak{sl}(\infty))$-module has zero annihilator, see~\cite{PP3}. We intend to use our detailed understanding of primitive ideals of $\Ue(\frak{sl}(\infty))$ in the ongoing development of the representation theory of the Lie algebra $\frak{sl}(\infty)$.

Moreover, in the recent review~\cite{PP3} we presented the classification of primitive ideals of $\Ue(\frak{sl}(\infty))$ subject to the condition that they are integrable. This classification relies on the work of A.~Zhilinskii~\cite{Zh1, Zh2, Zh3}. We recall that a two-sided ideal $I$ is {\it integrable} if  $I=\Ann_{\Ue(\frak{sl}(\infty))}M$ for an integrable $\Ue(\frak{sl}(\infty))$-module $M$, i.e., a $\Ue(\frak{sl}(\infty))$-module $M$ which becomes a sum of finite-dimensional $\Ue(\frak{sl}(n))$-modules after being restricted to $\Ue(\frak{sl}(n))$ for each $n\ge2$. The main result of the present paper is that every primitive ideal of $\Ue(\frak{sl}(\infty))$ is integrable, and hence the integrable primitive ideals described in~\cite{PP3} are all primitive ideals.

\section{Main result}\label{Smr} Fix an uncountable algebraically closed field $\mathbb F$ of characteristic 0. All vector spaces (in particular, Lie algebras) are assumed to be defined over $\mathbb F$. If $W$ is a vector space, then $W^*:=\operatorname{Hom}_{\mathbb F}(W, \mathbb F)$.

One can define the Lie algebra $\frak{sl}(\infty)$ as the direct limit of (arbitrary) inclusions of the form
$$\frak{sl}(2)\hookrightarrow\frak{sl}(3)\hookrightarrow...\hookrightarrow\frak{sl}(n)\hookrightarrow....$$
It is well known that this property determines $\frak{sl}(\infty)$ up to isomorphism. Moreover, for $n\ge3$, the defining representation $V(n)$ of $\frak{sl}(n)$ decomposes as $V(n-1)\oplus\mathbb F$ over $\frak{sl}(n-1)$, and, up to isomorphism, there is only one $\frak{sl}(\infty)$-module obtained as a direct limit $\varinjlim V(n)$ (this is not the case for other infinite-dimensional locally simple Lie algebras as can be seen for instance from~\cite{HS}). The direct imit $\varinjlim V(n)$ is by definition the {\it natural} $\frak{sl}(\infty)$-module, and we denote it by $V$. Similarly, there is a well-defined {\it  conatural} $\frak{sl}(\infty)$-module $V_*:=\varinjlim (V(n)^*)$. In what follows we consider also the symmetric and exterior algebras $\Sa^\cdot(V):=\oplus_{k\ge0}\Sa^k(V)$ and $\Lambda^\cdot(V):=\oplus_{k\ge0}\Lambda^k(V)$, as well as $\Sa^\cdot(V_*)$ and $\Lambda^\cdot(V_*)$.

Next, for any (possibly empty) Young diagram $Y$ whose column lengths form a sequence  $$l_1\ge l_2\ge...\ge l_s>0$$(the empty sequence for $Y=\emptyset$), we define the $\frak{sl}(\infty)$-module $V_Y$ as a direct limit $\varinjlim_{n\ge s} V_Y(n)$: here $V_Y(n)$ denotes a simple finite-dimensional $\frak{sl}(n)$-module with highest weight $$l_1\ge l_2\ge...\ge l_s>0\ge0\ge...\ge0$$ where the sequence has length $n$~(for $Y=\emptyset$ the highest weight of $V_Y(n)$ equals 0). The $\frak{sl}(n)$-module $V_Y(n)$ is isomorphic to a simple direct summand of the tensor product $$\operatorname{S}^{l_1}(V(n))\otimes\operatorname{S}^{l_2}(V(n))\otimes....\otimes\operatorname{S}^{l_s}(V(n)),$$ and the direct limit $\varinjlim_{n\ge s} V_Y(n)$ is clearly well defined up to isomorphism. Similarly, for $\frak g(\infty)=\frak{sl}(\infty)$, we define $(V_Y)_*$ as a direct limit $\varinjlim_{n\ge s}(V_Y(n))^*$.

Finally, we set $$I(x, y, s, t, l_1,..., l_s, r_1,..., r_t)=I(x, y, Y_l, Y_r):=\Ann_{\Ue(\frak{sl}(\infty))}~(V_{Y_l}\otimes(\Sa^\cdot(V))^{\otimes x}\otimes(\Lambda^\cdot(V))^{\otimes y}\otimes (V_{Y_r})_*)$$ where $x, y\in\mathbb Z_{\ge0}$, $Y_l$ and $Y_r$ are Young diagrams with respective column lengths $l_1,..., l_s$ and $r_1,..., r_t$.

The classification of primitive ideals of $\Ue(\frak{sl}(\infty))$ can now be stated as follows.
\begin{theorem}\label{T1} All ideals $I(x, y, s, t,  l_1,..., l_s, r_1,..., r_t)$ are primitive and nonzero, and any nonzero primitive ideal $I$ of $\Ue(\frak{sl}(\infty))$ equals exactly one of these ideals.\end{theorem}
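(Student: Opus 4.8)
The plan is to obtain Theorem~\ref{T1} by combining two inputs: the classification of \emph{integrable} primitive ideals recalled from~\cite{PP3}, and the new assertion that every primitive ideal of $\Ue(\frak{sl}(\infty))$ is integrable. By construction the module $V_{Y_l}\otimes(\Sa^\cdot(V))^{\otimes x}\otimes(\Lambda^\cdot(V))^{\otimes y}\otimes(V_{Y_r})_*$ is integrable, so each ideal $I(x,y,s,t,l_1,\dots,l_s,r_1,\dots,r_t)$ is an integrable ideal. According to~\cite{PP3} these are exactly the nonzero integrable primitive ideals, and distinct admissible parameter tuples yield distinct ideals; this gives at once the first assertion of the theorem together with the word ``exactly'' in the second assertion. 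Hence the theorem reduces to the single implication
$$I \text{ a nonzero primitive ideal} \ \Longrightarrow\ I \text{ is integrable},$$
since then $I$ equals one, and by distinctness exactly one, of the ideals $I(\dots)$.

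To prove this implication I would pass to the local system of $I$. Write $I=\Ann_{\Ue(\frak{sl}(\infty))}M$ for a simple module $M$ and set $I_n:=I\cap\Ue(\frak{sl}(n))$. Since $\Ue(\frak{sl}(\infty))=\bigcup_n\Ue(\frak{sl}(n))$ we have $I=\bigcup_n I_n$ and $I_n=I_{n+1}\cap\Ue(\frak{sl}(n))$, so $(I_n)_{n\ge2}$ is a coherent local system of two-sided ideals in the sense of Zhilinskii~\cite{Zh1,Zh2,Zh3}. Integrability of $I$ can be read off this system: $I$ is integrable precisely when each $I_n$ is an intersection of annihilators of finite-dimensional simple $\frak{sl}(n)$-modules, i.e.\ when $(I_n)$ is of integrable type. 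The goal is thus to prove that the coherent local system attached to a \emph{primitive} ideal is necessarily of integrable type. At each finite level one has Duflo's theorem~\cite{Du}, which realizes every primitive ideal of $\Ue(\frak{sl}(n))$ as the annihilator of a simple highest weight module; combined with the translation principle this reduces the behaviour of $I_n$ to a question about the infinitesimal characters that occur, and these are organized coherently as $n$ grows.

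The main obstacle is to rule out the non-integrable coherent local systems appearing in Zhilinskii's classification, that is, to show that such a system cannot be realized as the local system of a nonzero primitive ideal. The difficulty is that an arbitrary simple $\Ue(\frak{sl}(\infty))$-module $M$ carries almost no a priori structure --- it need be neither integrable nor a highest weight module --- so the first task is to gain enough control over $M$, or over $\Ue(\frak{sl}(\infty))/I$, to constrain the infinitesimal characters governing the $I_n$. The mechanism I expect to exploit is the phenomenon already visible for highest weight modules, where generic infinitesimal data force the annihilator to vanish (cf.\ the remark that a generic simple highest weight module has zero annihilator): a non-integrable coherent local system corresponds to such generic data, for which $\bigcup_n I_n$ collapses to $0$. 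Making this precise --- proving that the only nonzero coherent local systems realizable as annihilators of simple modules are those of integrable type --- is the crux of the argument, and once it is established the reduction in the first paragraph completes the proof of Theorem~\ref{T1}.
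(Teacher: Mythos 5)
Your opening reduction is correct and is exactly the paper's own: since \cite[Proposition~4.8]{PP2} (recalled in~\cite{PP3}) says the ideals $I(x,y,Y_l,Y_r)$ are precisely the nonzero integrable primitive ideals and are pairwise distinct, Theorem~\ref{T1} follows once every nonzero primitive ideal is shown to be integrable (Theorem~\ref{T1b}). But from that point on your proposal has two genuine gaps, and they are precisely the two pillars of the paper. The first is your claim that integrability of $I$ ``can be read off'' the system $(I_n)$, i.e.\ that $I$ is integrable precisely when each $I_n$ is an intersection of annihilators of finite-dimensional simple $\frak{sl}(n)$-modules. The ``only if'' direction is easy, but the ``if'' direction is Proposition~\ref{Plinttoint} (locally integrable implies integrable), and it is not formal: the sets $Q(I)_n$ of finite-dimensional simples whose annihilators contain $I_n$ form a priori only a \emph{precoherent} local system, $Q(I)_{n'}|_{\frak{sl}(n)}\subset Q(I)_n$ with possibly strict inclusion, whereas manufacturing a single integrable module with annihilator exactly $I$ requires a \emph{coherent} local system in Zhilinskii's sense. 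Closing this gap is the content of Propositions~\ref{Pdef} and~\ref{Ppcls2}, which rest on the combinatorial Proposition~\ref{Pmain} and Lemmas~\ref{Lgts}--\ref{Lfir}. Your remark that $(I_n)$ is ``a coherent local system of two-sided ideals'' conflates the trivial compatibility $I_n=I_{n+1}\cap\Ue_n$ with coherence of the associated sets of representations, which is exactly what can fail a priori.

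The second and more serious gap is the crux itself: that a nonzero primitive ideal is locally integrable (Proposition~\ref{Pprtolint}). You state this as the goal and offer only the heuristic that ``generic infinitesimal data force the annihilator to vanish,'' which is not an argument and is also not how the paper proceeds. The paper proves the stronger Proposition~\ref{Pjtolint} ($\sqrt{I}$ is locally integrable for \emph{any} ideal $I$) by combining: Lemma~\ref{Lalg}, an elementwise description of $\sqrt{I}$ which uses the uncountability of $\mathbb F$; the theorem of~\cite{PP1} that $\Var(I_n)=\frak{sl}(n)^{\le r}$ for a uniform $r$; Joseph's theorem that the associated variety of a primitive ideal of $\Ue_n$ is a nilpotent orbit closure; and the Barbasch--Vogan tableau parametrization of primitive ideals of $\Ue_n$, which together yield Lemma~\ref{Lrad}: any primitive ideal $J(n)\supset I_n$ of $\Ue_n$ restricts to an \emph{integrable} ideal of $\Ue_{n-r}$. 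None of these ingredients, nor any substitute for them, appears in your proposal. In short, what you have written reproduces the paper's one-sentence reduction and then describes, rather than proves, both of the theorems that constitute the actual content of the paper.
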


Since Proposition~4.8  in~\cite{PP2} asserts that the primitive ideals $I(x, y, Y_l, Y_r)$ are precisely the integrable primitive ideals of $\Ue(\frak{sl}(\infty))$, in order to prove Theorem~\ref{T1} it is enough to prove the following.
\begin{theorem}\label{T1b} Every nonzero primitive ideal of $\Ue(\frak{sl}(\infty))$ is integrable.\end{theorem}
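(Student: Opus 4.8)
The plan is to descend to the associated graded algebra and reduce the problem to a statement in $\mathrm{SL}(\infty)$-invariant affine geometry. Equip $\Ue(\frak{sl}(\infty))$ with the PBW filtration, so that $\gr\Ue(\frak{sl}(\infty))=\Sa(\frak{sl}(\infty))$, and let $I=\Ann_{\Ue(\frak{sl}(\infty))}M$ be a nonzero primitive ideal with $M$ simple. Then $\gr I$ is a nonzero graded ideal of $\Sa(\frak{sl}(\infty))$ stable under the adjoint action of $\frak{sl}(\infty)$, hence under $\mathrm{SL}(\infty)$, and its zero locus $\Var(\gr I)$ is a proper $\mathrm{SL}(\infty)$-stable closed subvariety of $\frak{sl}(\infty)^*$. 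I would first reduce the theorem, via the finite-dimensional theory of primitive ideals (Duflo's theorem and the structure of their associated varieties) and the classification of integrable primitive ideals in Proposition~4.8 of~\cite{PP2}, to the assertion that $\Var(\gr I)$ is contained in one of the $\mathrm{SL}(\infty)$-stable bounded-rank determinantal loci $\{\varphi\in\frak{sl}(\infty)^*:\operatorname{rank}\varphi\le r\}$ governed by the discrete parameters $(x,y,Y_l,Y_r)$. It therefore suffices to prove that $\gr I\neq0$ already forces $\Var(\gr I)$ into a locus of bounded rank.

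The next step is a reduction to a single polynomial. Choose a nonzero homogeneous $f\in\gr I$; then $f\in\Sa(\frak{sl}(N))$ for some $N$. Since $\gr I$ is $\mathrm{SL}(\infty)$-stable, it contains the nonzero invariant ideal generated by the whole orbit $\mathrm{SL}(\infty)\cdot f$, and consequently
$$\Var(\gr I)\ \subseteq\ \{\,\varphi\in\frak{sl}(\infty)^*:\ g(\varphi)=0\ \text{ for all }g\in\mathrm{SL}(\infty)\cdot f\,\}.$$
Thus everything comes down to understanding the common zero locus of the $\mathrm{SL}(\infty)$-orbit of a single nonzero element of $\Sa(\frak{sl}(\infty))$.

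The main obstacle, and the heart of the argument, is the following geometric statement: for any nonzero $f\in\Sa(\frak{sl}(\infty))$ the common zero locus of $\mathrm{SL}(\infty)\cdot f$ is contained in a locus of bounded rank, the bound depending only on $f$. Equivalently, every functional $\varphi\in\frak{sl}(\infty)^*$ of sufficiently large rank is moved by some $g\in\mathrm{SL}(\infty)$ to a point where $f$ does not vanish. The content here is that $f$ involves only the finitely many matrix coordinates lying in $\frak{sl}(N)$, whereas a high-rank $\varphi$ has a large finitary core that can be rotated by $\mathrm{SL}(\infty)$ into the range of these coordinates; carrying this out uniformly, and extracting the exact rank bound, is where the real work lies. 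Once this bound is established, $\Var(\gr I)$ lies in a bounded-rank determinantal locus, and by the reduction of the first step $I$ coincides with one of the ideals $I(x,y,Y_l,Y_r)$ and is therefore integrable; combined with Proposition~4.8 of~\cite{PP2} this yields Theorem~\ref{T1} as well.
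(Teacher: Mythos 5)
Your proposal has a genuine gap, and it sits exactly at the step you dismiss as a routine ``first reduction.'' You claim that, via Duflo's theorem and Proposition~4.8 of~\cite{PP2}, the theorem reduces to showing that $\Var(\gr I)$ lies in a bounded-rank locus $\{\varphi\in\frak{sl}(\infty)^*:\operatorname{rk}\varphi\le r\}$. No such reduction exists: the associated variety is far too coarse an invariant to detect integrability. Indeed, by \cite[Theorem~3.3]{PP1} --- which is quoted and used in the paper --- \emph{every} nonzero ideal of $\Ue(\frak{sl}(\infty))$ already has $\Var(I_n)=\frak{sl}(n)^{\le r}$ for some fixed $r$ and all $n$, so the rank bound you aim for holds automatically for all nonzero ideals and carries no information that could single out the integrable ones. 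The same coarseness is visible at finite level: annihilators of simple highest weight $\frak{sl}(n)$-modules with non-integral highest weights have nilpotent associated varieties of bounded corank without being annihilators of finite-dimensional modules, and many distinct primitive ideals (integrable and not) share one and the same associated variety. Consequently the statement you isolate as ``the heart of the argument'' --- that the common zero locus of $\mathrm{SL}(\infty)\cdot f$ for a single nonzero $f$ has bounded rank --- is true but is precisely the already-known input \cite[Theorem~3.3]{PP1}; it is the starting point of the paper's proof, not its difficulty.

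What actually fills the gap in the paper is a two-stage argument with no analogue in your proposal. First (Lemma~\ref{Lrad} and Proposition~\ref{Pjtolint}), the rank bound on $\Var(J(n))$ for a primitive $J(n)\supset I_n$ is combined with Joseph's theorem and the Barbasch--Vogan parametrization of primitive ideals of $\Ue_n$ by tableaux: the corank bound forces the tableau of $J(n)$ to have one row of length at least $n-r$, and the corresponding highest weight is dominant integral on a root subalgebra $\frak{sl}(n-r)$, so $J(n)\cap\Ue_{n-r}$ is integrable. This yields only that $I$ is \emph{locally} integrable. Second (Propositions~\ref{Plinttoint}, \ref{Pdef}, \ref{Ppcls2}, \ref{Pmain}), one must pass from locally integrable to integrable; here the obstruction is that the local data $Q(I)$ form only a precoherent local system, and showing it is equivalent to a coherent local system in Zhilinskii's sense requires the new combinatorics of $\mathbb{Z}$-partitions and the dominance order (Proposition~\ref{Lgts2} and its lemmas). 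Your proposal skips both stages: the first is replaced by an unproved implication ``bounded-rank variety $\Rightarrow$ $I=I(x,y,Y_l,Y_r)$,'' and the second does not appear at all.
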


\section{Corollaries, examples and further results}
A brief review of basic facts concerning  splitting Cartan and Borel subalgebras of $\frak{sl}(\infty)$ (including the definitions), as well as the roots of $\frak{sl}(\infty)$, see in~\cite{PP3}. For any splitting Borel subalgebra $\frak b\subset\frak{sl}(\infty)$ with fixed Cartan subalgebra $\frak h$, there is a well-defined notion of simple $\frak b$-highest weight module $L_\frak b(\lambda)$ with highest weight $\lambda\in\frak h^*$. Given a weight $\lambda\in\frak h^*$, by definition $L_\frak b(\lambda)$ is the unique simple quotient of the induced module $\Ue(\frak{sl}(\infty))\otimes_{\Ue(\frak  b)}\mathbb F_\lambda$, where $\mathbb F_\lambda$ is a one-dimensional $\frak b$-module one which $\frak h$ acts through the weight $\lambda$.

There is a class of Borel subalgebras of $\frak{sl}(\infty)$, which we call ideal. 
A quick definition of an {\it ideal Borel subalgebra} $\frak b$ of $\frak{sl}(\infty)$ is as follows: this is a splitting Borel subalgebra of $\frak{sl}(\infty)$ for which any simple object in the category of tensor modules $\mathbb T_{\frak{sl}(\infty)}$ defined in~\cite{DPS} is a $\frak b$-highest weight $\frak{sl}(\infty)$-module. In~\cite[Section~2.2]{PP2} (see also~\cite[Section~5]{PP3}) we have given a description of ideal Borel subalgebras in terms of their roots~\footnote{It is worth pointing out that ideal Borel subalgebras are not the ``most obvious'' Borel subalgebras in a matrix representation of $\frak{sl}(\infty)$. In particular, if one thinks of $\frak{sl}(\infty)$ as the finitary matrices extending infinitely down and to the right, then the Borel subalgebra of upper triangular matrices is not ideal.}.

The following is an analogue of Duflo's Theorem.
\begin{theorem}\label{T1c}Let $\frak b$ be an ideal Borel subalgebra of $\frak{sl}(\infty)$ with a fixed splitting Cartan subalgebra $\frak h\subset\frak b$.\\
a)For any primitive ideal $I$ of $\Ue(\frak{sl}(\infty))$ there exists $\lambda\in\frak h^*$ such that $I=\Ann_{\Ue(\frak{sl}(\infty))}L_\frak b(\lambda)$.\\
b) If $I=I(0, 0, s, t, l_1,..., l_s, r_1,..., r_t)$ then the weight $\lambda\in\frak h^*$, such that $I=\Ann_{\Ue(\frak{sl}(\infty))}L_\frak b(\lambda)$, is unique.
\end{theorem}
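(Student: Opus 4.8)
The plan is to deduce Theorem~\ref{T1c} from the classification of primitive ideals in Theorem~\ref{T1} together with the defining property of ideal Borel subalgebras (that every simple tensor module is $\frak b$-highest weight). Since $L_\frak b(\lambda)$ is simple, $\Ann_{\Ue(\frak{sl}(\infty))}L_\frak b(\lambda)$ is a primitive ideal, so by Theorem~\ref{T1} it lies in $\{0\}\cup\{I(x,y,Y_l,Y_r)\}$; part~a) is therefore the assertion that the map $\lambda\mapsto\Ann_{\Ue(\frak{sl}(\infty))}L_\frak b(\lambda)$ is \emph{surjective} onto this set. The zero ideal is attained by any generic $\lambda$, because a generic simple highest weight module has zero annihilator (see the introduction). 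It remains to realize each nonzero $I(x,y,Y_l,Y_r)$.

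For the construction I would use the exhaustion $\frak{sl}(\infty)=\varinjlim\frak{sl}(n)$ with the induced Borel subalgebras $\frak b_n:=\frak b\cap\frak{sl}(n)$, guided by the integrable module $M:=V_{Y_l}\otimes(\Sa^\cdot(V))^{\otimes x}\otimes(\Lambda^\cdot(V))^{\otimes y}\otimes(V_{Y_r})_*$, whose annihilator is $I(x,y,Y_l,Y_r)$. The weight $\lambda\in\frak h^*$ is built by placing the end-data $Y_l=(l_1,\dots,l_s)$ at the top and $Y_r=(r_1,\dots,r_t)$ at the bottom relative to the order on $\frak h$-weights determined by $\frak b$, while encoding the $x$ symmetric and $y$ exterior factors by $x+y$ infinite regular stretches in the bulk of $\lambda$, of two distinct types (one type per symmetric factor, the other per exterior factor). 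The point is that such a $\lambda$ is dominant integral for $\frak b$, so $L_\frak b(\lambda)$ is integrable.

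I would then verify $\Ann_{\Ue(\frak{sl}(\infty))}L_\frak b(\lambda)=I(x,y,Y_l,Y_r)$ by computing the left-hand side at the finite level and passing to the limit. Concretely, $\Ann_{\Ue(\frak{sl}(n))}L_{\frak b_n}(\lambda|_n)$ is accessible through the classical Duflo theorem, and the compatible family of these finite primitive ideals assembles, in the language of Zhilinskii's coherent local systems, into the infinite primitive ideal sought. The \emph{main obstacle} is precisely the dictionary between the asymptotic shape of $\lambda$ and the quadruple $(x,y,Y_l,Y_r)$: one must show that each infinite stretch of the first (resp.\ second) type contributes exactly one symmetric (resp.\ exterior) factor, so that the number of such stretches equals $x$ (resp.\ $y$), while the finite end-data recovers $Y_l$ and $Y_r$. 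Once this dictionary is established the matching of parameters is forced, and in particular the annihilator is nonzero.

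For part~b), when $x=y=0$ the construction above produces a \emph{finitely supported} weight $\lambda_0$, with nonzero entries only $l_1,\dots,l_s$ at the top and $-r_t,\dots,-r_1$ at the bottom. Uniqueness then follows from the same dictionary, read backwards: if $\Ann_{\Ue(\frak{sl}(\infty))}L_\frak b(\lambda')=I(0,0,Y_l,Y_r)$ for some $\lambda'$, then the vanishing $x=y=0$ forces $\lambda'$ to contain no infinite regular stretch of either type, hence to be finitely supported, and the end-data of $\lambda'$ then recovers $Y_l$ and $Y_r$ uniquely, so $\lambda'=\lambda_0$. Thus the delicate point in both parts is the asymptotics-to-parameters dictionary; granting it, Theorem~\ref{T1c} follows from Theorem~\ref{T1}.
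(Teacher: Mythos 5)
Your overall strategy has the right shape --- reduce to Theorem~\ref{T1} and then realize every ideal $I(x,y,Y_l,Y_r)$ as $\Ann_{\Ue}L_{\frak b}(\lambda)$ for an explicitly constructed $\lambda$ --- and indeed this is what sits behind the paper's (purely citational) proof: part~a) is quoted from Theorem~\ref{T1} together with \cite[Theorem~3.1]{PP2}, and part~b) from Sava's uniqueness theorem \cite{S}. However, your construction contains a genuine error at its central point, the encoding of the symmetric factors. You claim that the weight carrying the $x$ symmetric stretches can be taken \emph{dominant integral}, so that $L_{\frak b}(\lambda)$ is \emph{integrable}. This is impossible as soon as $x\ge 1$. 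Take $I=I(1,0,\emptyset,\emptyset)=\Ann_{\Ue}\Sa^\cdot(V)$ and suppose $L:=L_{\frak b}(\lambda)$ were integrable with $\Ann_{\Ue}L=I$; then $L=\varinjlim V_{\lambda^{(n)}}(n)$, where $\lambda^{(n)}$ is the restriction of the \emph{fixed} weight $\lambda$, and every simple $\frak{sl}(n)$-constituent of $L$ must lie in $Q(I)_n$, which for this ideal consists only of the classes of the modules $\Sa^k(V(n))$ and of their duals. In particular the highest weight $\lambda^{(n)}$ itself is of this form for every $n$, and since the $\lambda^{(n)}$ are restrictions of one fixed $\lambda$, the parameter $k$ stabilizes; hence $L\cong\Sa^k(V)$ or $L\cong\Sa^k(V_*)$ for a single $k$, whose annihilator is of the form $I(0,0,\cdot,\cdot)$ and is \emph{not} $I(1,0,\emptyset,\emptyset)$ (these ideals are distinct by the ``exactly one'' clause of Theorem~\ref{T1}). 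The unbounded family of constituents $\Sa^k(V(n))$, $k\to\infty$, demanded by an ideal with $x\ge1$ can never come from a fixed integral highest weight. This is precisely why the paper's Example encodes the symmetric factors by the terms $\sum_{1\le i\le x}i\alpha\varepsilon_{2i-1}$ with $\alpha\in\mathbb F\setminus\mathbb Q$ \emph{irrational}: the resulting $L_{\frak b}(\lambda)$ is non-integrable, even though its annihilator is an integrable ideal; only the exterior factors admit an integral encoding, via the infinite stretch $y(\sum_{k\ge1}\varepsilon_{2k-1})$.

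Beyond this error, the ``asymptotics-to-parameters dictionary'' on which both of your parts rest is not a lemma one can wave at: computing $\Ann_{\Ue}L_{\frak b}(\lambda)$ for arbitrary $\lambda\in\frak h^*$ is the content of the entire paper \cite{PP2}, and your sketch of it (finite-level Duflo plus passage to the limit through coherent local systems) gets the key entry of the dictionary wrong, as above. For part~b) there is a further gap: uniqueness must be proved over \emph{all} weights $\lambda'\in\frak h^*$, including non-integral and non-dominant ones, whereas your argument only rules out weights already assumed to be of your special ``stretch'' shape; excluding, say, weights with irrational entries from having annihilator $I(0,0,Y_l,Y_r)$ requires the full dictionary again. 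This stronger statement is exactly Sava's theorem \cite{S}, which the paper invokes.
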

\begin{proof}Part a) is implied by Theorem~\ref{T1} and~\cite[Theorem 3.1]{PP2}.

Part b) follows directly from a more general uniqueness result of A.~Sava~\cite{S}, see also~\cite{PP3}.\end{proof}
\begin{Ex0} Recall that the roots of $\frak{sl}(\infty)$ have the form $\{\varepsilon_i-\varepsilon_j\}_{i\ne j\in\mathbb Z_{>0}}$, where $\varepsilon_i$ are certain standard vectors, see~\cite[Appendix~A]{PP3}. Consider the Borel subalgebra $\frak b\subset\frak{sl}(\infty)$ with positive roots \begin{center}$\{\varepsilon_i-\varepsilon_j\mid (i>j)\&(2\nmid i, j),$ or $(i<j)\&(2\mid i, j),$ or $(2\nmid i)\&(2\mid j)$\}.\end{center}
This Borel subalgebra is ideal~\cite{PP3}. Given an arbitrary primitive ideal $I(x, y, s, t, l_1,..., l_s, r_1,..., r_t)$, the weight $\lambda$ in Theorem~\ref{T1c}a) can be chosen as

$$\sum\limits_{1\le i\le x}i\alpha\varepsilon_{2i-1}+\sum_{1\le j\le s}l_i\varepsilon_{2i+2x-1}+y(\sum_{k\ge1}\varepsilon_{2k-1})+\sum_{1\le j\le t}r_{t+1-j}\varepsilon_{2j}$$
for an arbitrary $\alpha\in\mathbb F\backslash\mathbb Q$. Moreover, one  can show that, if $x=0$, then the above weight $\lambda$ is unique with the property $I=\Ann_{\Ue(\frak{sl}(\infty))}L_\frak b(\lambda)$.

However, this uniqueness result is specific to the chosen ideal Borel subalgebra $\frak b$. Indeed, consider the Borel subalgebra $\frak b'$ of $\frak{sl}(\infty)$ with positive roots
\begin{center}$\{\varepsilon_i-\varepsilon_j\mid (i>j)\&(2\nmid i, j)\&(i\ne 1)$ or $(i>j)\&(2\mid i, j)$ or $(2\nmid i)\&(2\mid j)$\}.\end{center}
Then $\frak b'$ is also ideal, but one can check that $\Ann_{\Ue(\frak{sl}(\infty))}L_{\frak b'}(\lambda)=\Ann_{\Ue(\frak{sl}(\infty))}L_{\frak b'}(\lambda')$ where
$$\lambda=\sum\limits_{i\ge 1}\varepsilon_i,\hspace{10pt}\lambda'=\sum\limits_{i\ge 2}\varepsilon_i.$$

Finally, if $x=y=0$ and $\frak b''$ is any ideal Borel subalgebra then the weight $\lambda$, such that
$$I(0, 0, s, t, l_1,..., l_s, r_1,..., r_t)=\Ann_{\Ue(\frak{sl}(\infty))} L_{\frak b''}(\lambda),$$ is unique. Moreover, in this case the $\frak{sl}(\infty)$-module $L_{\frak b''}(\lambda)$ is isomorphic to the socle of the tensor product $V_{Y_l}\otimes (V_{Y_r})_*$ (see in~\cite{PSt} the proof that this tensor product has simple socle).\hspace{235pt}$\diamondsuit$\end{Ex0}
\section{Proof of Theorem~\ref{T1b}}
The proof consists of many reduction steps which we will go through one-by-one.

By an ideal of an associative algebra we always mean a two-sided ideal. We set $$\Ue:=\Ue(\frak{sl}(\infty)),\hspace{10pt} \Ue_n:=\Ue(\frak{sl}(n))\subset\Ue.$$ For an ideal $I\in\Ue$, we put $I_n:=I\cap\Ue_n$ for $n\ge2$.

Let $A$ be an associative algebra and $M$ be an $A$-module. We say that $M$ is {\it integrable} if, for any finitely generated subalgebra $A'\subset A$ and any $m\in M$, we have $$\dim (A'\cdot m)<\infty.$$We define an ideal $I\subset A$ to be {\it integrable} if $I=\Ann_{A}M$ for an integrable $A$-module $M$. 
An ideal $I\subset A$ is {\it locally integrable} if, for any finitely generated subalgebra $A'\subset A$, the ideal $I\cap A'$ is an integrable ideal of $A'$.

It is easy to see that an ideal $I\subset\Ue$ is locally integrable iff, for every $n\ge2$, the ideal $I_n$ is an intersection of ideals of finite codimension in $\Ue_n$.

Theorem~\ref{T1b} is a direct corollary of the following two statements:
\begin{proposition}\label{Pprtolint}If $I\subset\Ue$ is a primitive ideal then $I$ is locally integrable.\end{proposition}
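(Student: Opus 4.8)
The plan is to argue level by level, using the criterion recorded just before the statement: $I$ is locally integrable if and only if $I_n=I\cap\Ue_n$ is an intersection of ideals of finite codimension in $\Ue_n=\Ue(\frak{sl}(n))$ for every $n\ge 2$. Writing $I=\Ann_{\Ue}M$ for a simple $\Ue$-module $M$, one has $I_n=\Ann_{\Ue_n}(M|_{\frak{sl}(n)})$, and this ideal is an intersection of finite-codimension ideals precisely when the finite-dimensional $\frak{sl}(n)$-modules annihilated by $I_n$ separate the points of $\Ue_n/I_n$; equivalently, for each $u\in\Ue_n\setminus I_n$ one must exhibit a finite-dimensional $\frak{sl}(n)$-module $F$ with $I_nF=0$ and $uF\ne 0$. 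A first, genuinely useful reduction is that this property descends along the inclusions $\frak{sl}(n)\subset\frak{sl}(N)$: if $I_N=\bigcap_\beta\Ann_{\Ue_N}(F_\beta)$ with $\dim F_\beta<\infty$, then, since $I_n=I_N\cap\Ue_n$, we obtain $I_n=\bigcap_\beta\Ann_{\Ue_n}(F_\beta|_{\frak{sl}(n)})$, and each restriction $F_\beta|_{\frak{sl}(n)}$ is again finite-dimensional. Hence it suffices to establish the required property for cofinally many $N$, where moreover $I_N\ne 0$ because $I\ne 0$.

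To produce the separating finite-dimensional modules I would analyse $M$ through the commuting pair formed by $\frak{sl}(n)$, realized on the first $n$ indices, and the copy $\frak k\cong\frak{sl}(\infty)$ on the remaining indices, which centralizes $\frak{sl}(n)$. The idea is to cut $M$ down to $\frak{sl}(n)$-subquotients whose $\frak{sl}(n)$-central characters are controlled, and then to show that these central characters are integral and dominant, so that the corresponding quotients of $\Ue_n$ carry finite-dimensional modules annihilated by $I_n$. The device that should force this control is the very element realizing $I_n\ne 0$: choosing a nonzero $w\in I\cap\Ue_N$, the relations it imposes, combined with the action of the Casimir elements of the intermediate algebras $\frak{sl}(N)$, are what I would use to bound the central characters occurring in $M|_{\frak{sl}(n)}$ and to exclude the continuous families of characters that arise for faithful (zero-annihilator) modules.

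The main obstacle, and the reason the statement is delicate, is that $M|_{\frak{sl}(n)}$ need not be locally finite. Indeed, were $M$ locally finite over every $\frak{sl}(n)$ it would be an integrable module and Theorem~\ref{T1b} would follow at once, so the finite-dimensional modules witnessing the residual finite-dimensionality of $\Ue_n/I_n$ cannot in general be taken to be the cyclic submodules $\Ue_n\cdot m$ of $M$ itself. The crux is therefore to manufacture finite-dimensional $\frak{sl}(n)$-modules annihilated by $I_n$ out of the interaction between the nonzero global ideal $I$ and the infinite rank of $\frak{sl}(\infty)$, and in particular to exclude ``generic'', Verma-type $\frak{sl}(n)$-subquotients of $M$: such subquotients have central characters shared by no finite-dimensional module and would obstruct the sought description of $I_n$. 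Showing that the hypothesis $I\ne 0$ rules these out is precisely the point at which the passage to the infinite-dimensional Lie algebra must be used decisively, and I expect it to be the hard part of the argument.
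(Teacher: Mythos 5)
Your opening reduction is correct (if $I_N$ is an intersection of finite-codimension ideals of $\Ue_N$, then $I_n=I_N\cap\Ue_n$ is an intersection of the annihilators of the restrictions $F_\beta|_{\frak{sl}(n)}$, so local integrability descends), but the proof stops exactly where the statement starts to need an argument. You yourself flag the crux --- excluding generic, Verma-type behaviour of $M|_{\frak{sl}(n)}$ using $I\ne 0$ --- as ``the hard part,'' and the plan you sketch for it (cutting $M$ into $\frak{sl}(n)$-subquotients via the commuting copy of $\frak{sl}(\infty)$ and bounding central characters by Casimirs of intermediate algebras) is neither carried out nor, as stated, aiming at a true statement. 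The hypothesis $I\ne 0$ does \emph{not} force the relevant $\frak{sl}(n)$-central characters to be integral and dominant: a primitive ideal $J(m)\subset\Ue_m$ containing $I_m$ can have a completely non-integral central character. What is forced is only that, in the Barbasch--Vogan tableau datum attached to $J(m)$, some single row has length at least $m-r$ for a uniform $r$, so that integrability appears only after cutting down to a \emph{conjugate} of the subalgebra $\Ue_{m-r}$; aiming directly at integral dominant central characters for $M|_{\frak{sl}(n)}$ would chase a false target.

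For comparison, the paper never touches the module $M$ at all; it works with ideals. Since $I$ is primitive, $I=\sqrt I$, and Lemma~\ref{Lalg} --- which is where the uncountability of $\mathbb F$ enters, an ingredient entirely absent from your proposal --- yields the local formula $I_n=(\sqrt I)_n=\cap_{m\ge n}\sqrt{I_m}$, reducing everything to primitive ideals $J(m)$ of $\Ue_m$ containing $I_m$. The decisive use of $I\ne 0$ and of the infinite rank of $\frak{sl}(\infty)$ is then \cite[Theorem~3.3]{PP1}: there is one fixed $r$ with $\Var(I_m)=\frak{sl}(m)^{\le r}$ for all $m$. Combining this with Joseph's theorem (the associated variety of a primitive ideal is a nilpotent orbit closure) and the Barbasch--Vogan classification, Lemma~\ref{Lrad} shows that $J(m)=\Ann_{\Ue_m}L(\lambda(m))$ where $\lambda(m)$ contains a dominant-integral string of length at least $m-r$, whence $J(m)\cap\Ue_{m-r}$ is an integrable ideal; intersecting over all such $J(m)$ and all $m\ge n+r$ exhibits $I_n$ as an intersection of ideals of finite codimension in $\Ue_n$ (Proposition~\ref{Pjtolint}). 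Your ``commuting pair / Casimir'' device would have to reproduce the content of these two steps, and nothing in the proposal indicates how; as it stands, the proposal is a valid preliminary reduction plus an honest declaration that the main point remains open, so it does not constitute a proof.
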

\begin{proposition}\label{Plinttoint} If $I$ is a locally integrable ideal then $I$ is integrable. \end{proposition}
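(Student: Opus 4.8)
\emph{Reduction to an intersection of integrable ideals.} The plan is to first make two soft observations and then reduce the whole statement to a question about finite-dimensional representations. A direct sum $\bigoplus_\alpha M_\alpha$ of integrable $\Ue$-modules is again integrable (any element lies in finitely many summands, and a finitely generated subalgebra sits in some $\Ue_n$), and $\Ann_\Ue(\bigoplus_\alpha M_\alpha)=\bigcap_\alpha\Ann_\Ue M_\alpha$. Hence the class of integrable ideals is closed under arbitrary intersections, and it suffices to write $I=\bigcap\{K\mid K\supseteq I,\ K\text{ integrable}\}$. Concretely, for every $u\in\Ue\setminus I$ I would produce an integrable $\Ue$-module $M$ with $I\subseteq\Ann_\Ue M$ and $u\notin\Ann_\Ue M$; the direct sum of such modules over all $u\notin I$ then has annihilator exactly $I$.

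\emph{Localization.} Fix $u\notin I$ and $n$ with $u\in\Ue_n$, so that $u\notin I_n$. By the criterion for local integrability recalled above, $I_n$ is an intersection of finite-codimension ideals of $\Ue_n$, so there is a finite-codimension ideal $J\supseteq I_n$ with $u\notin J$. Since $\operatorname{char}\mathbb F=0$ and $\frak{sl}(n)$ is semisimple, the finite-dimensional module $\Ue_n/J$ is completely reducible; consequently every finite-codimension ideal of $\Ue_n$ is an intersection of annihilators of simple finite-dimensional modules, and $I_n=\bigcap_{[Q]\in\mathcal L_n}\Ann_{\Ue_n}Q$, where $\mathcal L_n:=\{[Q]\mid Q\text{ simple finite-dimensional},\ \Ann_{\Ue_n}Q\supseteq I_n\}$. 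In particular there is $[Q]\in\mathcal L_n$ with $u\notin\Ann_{\Ue_n}Q$.

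\emph{Globalization.} The families $\mathcal L_\bullet=(\mathcal L_n)_n$ form a coherent local system: from the automatic identity $I_n=I_{n+1}\cap\Ue_n$ one checks that the simple constituents of the restriction to $\frak{sl}(n)$ of any $[P]\in\mathcal L_{n+1}$ again lie in $\mathcal L_n$. I would then lift $Q$ along the associated branching tree, i.e.\ find a chain $Q=Q_n,Q_{n+1},Q_{n+2},\dots$ of simple finite-dimensional modules with $[Q_m]\in\mathcal L_m$ and $Q_m$ a direct summand (hence, by complete reducibility, an actual submodule) of $Q_{m+1}|_{\frak{sl}(m)}$. The direct limit $\hat N:=\varinjlim_m Q_m$ is then a well-defined integrable $\frak{sl}(\infty)$-module whose $\frak{sl}(m)$-constituents all lie in $\mathcal L_m$; therefore $\Ann_\Ue\hat N\cap\Ue_m\supseteq I_m$ for every $m$, so $\Ann_\Ue\hat N\supseteq\bigcup_m I_m=I$, while $Q\subseteq\hat N|_{\frak{sl}(n)}$ gives $u\notin\Ann_\Ue\hat N$. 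This is exactly the module $M$ required in the first paragraph.

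\emph{The main obstacle.} The only non-formal point is the existence of the infinite branch through $Q$, equivalently that restricting $\mathcal L_\bullet$ to the subsystem of infinitely-liftable simples does not enlarge any $I_n$. This is delicate: $\mathcal L_n$ may be infinite and the branching tree infinitely ramified, so K\"onig's lemma does not apply directly, and a priori a "limiting" simple module could satisfy $\Ann_{\Ue_n}Q\supseteq I_n$ without lying on any infinite branch. I expect this to be the crux of the proof, and I would resolve it by invoking Zhilinskii's explicit classification of coherent local systems of $\frak{sl}(\infty)$~\cite{Zh1,Zh2,Zh3}, which shows that every such system is realized by an integrable module and is generated by branching from above; this supplies the needed branches, and in fact a single integrable module $\mathcal M$ with $\Ann_\Ue\mathcal M=I$, so that the final direct sum is not even necessary. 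Verifying that the system attached to a locally integrable ideal is of this form, with the ideals $I_n$ correctly recovered as the intersections of the annihilators in $\mathcal L_n$, is then the technical heart of the argument.
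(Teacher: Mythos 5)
Your setup is sound and in fact parallels the paper's own reduction: you pass from the locally integrable ideal $I$ to annihilators of simple finite-dimensional $\frak{sl}(n)$-modules, form the system $\mathcal L_n$ (which is exactly the paper's $Q(I)_n$), and aim to realize $I$ as the annihilator of an integrable module built from this system; the intersection-closure observation and the localization step are both correct. However, there is a genuine gap at precisely the point you flag as ``the main obstacle,'' and your proposed resolution does not close it. In the globalization paragraph you assert that $(\mathcal L_n)_n$ is a \emph{coherent} local system, but the justification you give --- that constituents of restrictions of members of $\mathcal L_{n+1}$ lie in $\mathcal L_n$, via $I_n=I_{n+1}\cap\Ue_n$ --- only yields the inclusion $\mathcal L_{n+1}|_{\frak{sl}(n)}\subset\mathcal L_n$, i.e., what the paper calls a \emph{precoherent} local system (p.l.s.). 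Coherence is the reverse inclusion: every member of $\mathcal L_n$ must branch from some member of $\mathcal L_{n+1}$, and this is exactly the existence of the infinite branches you need; it does not follow from the definitions. Invoking Zhilinskii's classification cannot repair this, because that classification describes \emph{coherent} local systems, and nothing in your argument shows that the system attached to a locally integrable ideal is coherent --- in the paper this fact appears only as a corollary (see the Remark following Proposition~\ref{Pdef}) of the very statement being proved.

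What your proposal defers as ``the technical heart'' is in fact the entire mathematical content of the paper's proof. The paper bridges the gap via Proposition~\ref{Ppcls2}: every p.l.s. is \emph{equivalent} to a c.l.s., meaning they agree for all sufficiently large $n$; this suffices because $I(Q)$ depends only on the equivalence class of $Q$, and $I(Q')$ is known to be integrable for any c.l.s. $Q'$ by~\cite{PP1}. The proof of Proposition~\ref{Ppcls2} writes $Q=\cap_{\lambda\notin Q}Q^\vee(\lambda)$, shows by explicit combinatorics of $\mathbb Z$-partitions in the dominance order (Proposition~\ref{Pmain}, Proposition~\ref{Lgts2}, and the three lemmas of Subsection~\ref{SScomb}) that each $Q^\vee(\lambda)$ is equivalent to an explicit c.l.s. $Q(\lambda)$, and then uses the artinian property of the lattice of c.l.s.~\cite{Zh1} to reduce the (possibly infinite) intersection to a finite one. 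None of this is formal, and without it your direct limit module $\hat N$ cannot be constructed: a priori a simple module $Q$ with $\Ann_{\Ue_n}Q\supseteq I_n$ need not lie on any infinite branch, exactly as you suspect. So the proposal correctly identifies where the difficulty lies but does not constitute a proof of the statement.
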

A stronger version of Proposition~\ref{Pprtolint} is proved in Subsection~\ref{SSprt1}, and in Subsections~\ref{SSprt2}-\ref{SScomb} we prove a chain of statements which imply Proposition~\ref{Plinttoint}.
\subsection{Proof of Proposition~\ref{Pprtolint}}\label{SSprt1}
Let $I$ be an ideal of an associative algebra $A$. We denote by $\sqrt I$ the intersection of all primitive ideals of $A$ containing $I$. One may note that $\sqrt I$ is the pullback in $A$ of the Jacobson radical of the ring $A/I$. If $I$ is a primitive ideal then $I=\sqrt I$.

It is clear that Proposition~\ref{Pprtolint} is a consequence of the following statement.
\begin{proposition}\label{Pjtolint} Let $I$ be an ideal of $\Ue$. Then $\sqrt I$ is a locally integrable ideal.\end{proposition}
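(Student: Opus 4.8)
The plan is to use the elementary characterization recorded just above the statement: $\sqrt I$ is locally integrable if and only if, for every $n\ge 2$, the ideal $(\sqrt I)_n=\sqrt I\cap\Ue_n$ is an intersection of finite-codimensional ideals of $\Ue_n$. Since $\sqrt I=\bigcap_P P$, the intersection being taken over all primitive ideals $P$ of $\Ue$ containing $I$, we have $(\sqrt I)_n=\bigcap_P(P\cap\Ue_n)$; and an intersection of finite-codimensional ideals is again an intersection of finite-codimensional ideals. Hence it suffices to treat a single primitive ideal: for every primitive $P\subset\Ue$ and every $n$, the ideal $P\cap\Ue_n$ should be an intersection of finite-codimensional ideals of $\Ue_n$. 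This is precisely the assertion that $P$ is locally integrable, so in effect the entire content of Propositions~\ref{Pprtolint} and~\ref{Pjtolint} is concentrated in this case.

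Next I would reduce to a statement about restriction of a simple module. Writing $P=\Ann_\Ue L$ for a simple $\Ue$-module $L$, one has $P\cap\Ue_n=\Ann_{\Ue_n}L$, the annihilator of $L$ viewed as an $\frak{sl}(n)$-module. In $\Ue_n=\Ue(\frak{sl}(n))$, the enveloping algebra of a finite-dimensional semisimple Lie algebra, the finite-codimensional ideals are exactly the annihilators of finite-dimensional modules, and an ideal is an intersection of finite-codimensional ideals precisely when it is the annihilator of a locally finite (equivalently, integrable) $\frak{sl}(n)$-module. Thus the goal becomes: exhibit an integrable $\frak{sl}(n)$-module $M$ with $\Ann_{\Ue_n}M=\Ann_{\Ue_n}L$. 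Fixing $0\ne v\in L$ and using $\Ue=\bigcup_m\Ue_m$ gives $L=\bigcup_{m\ge n}\Ue_m v$, whence $\Ann_{\Ue_n}L=\bigcap_{m\ge n}\Ann_{\Ue_n}(\Ue_m v)$, reducing matters to the finitely generated $\frak{sl}(m)$-modules $\Ue_m v$.

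The hard part is that $L|_{\frak{sl}(n)}$ is in general very far from locally finite: already the restriction of a generic Verma module of $\frak{sl}(m)$ to a Levi $\frak{sl}(n)$ has a simple infinite-dimensional subquotient whose annihilator is a noncofinite primitive ideal, so no single stage $m$ can by itself yield an intersection of finite-codimensional ideals. The resolution must exploit that $n$ stays fixed while $m$ ranges to infinity, i.e.\ the infinite ``room'' around $\frak{sl}(n)$ inside $\frak{sl}(\infty)$: the arbitrarily large $\frak{sl}(m)\supset\frak{sl}(n)$ together with the mutually commuting conjugate copies of $\frak{sl}(n)$ supported on disjoint blocks of coordinates. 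Concretely, I would compare the $\frak{sl}(n)$-action on $L$ with its conjugates under the finitary permutations relocating $\frak{sl}(n)$ to later blocks, pass to the limit $m\to\infty$, and use the finite-dimensional primitive-ideal theory of $\Ue(\frak{sl}(m))$ (Duflo's theorem~\cite{Du} and the combinatorial description summarized in~\cite{PP3}) to control the finite-stage annihilators $\Ann_{\Ue_n}(\Ue_m v)$. I expect the crux, and the step most likely to require the full strength of the structure theory, to be showing that the intersection over $m$ collapses the noncofinite behaviour present at each finite stage, so that $\Ann_{\Ue_n}L$ is genuinely the annihilator of an integrable $\frak{sl}(n)$-module.
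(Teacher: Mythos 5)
Your reductions are all correct as far as they go, but they only rearrange the problem; the step you yourself label ``the crux'' is the entire content of the proposition, and it is not proved --- it is only described in aspirational terms (``I would compare\dots'', ``I expect the crux to be\dots''). Concretely: passing from $\sqrt I=\bigcap_P P$ to a single primitive ideal $P$ of $\Ue$ is valid (though it inverts the paper's logic, where Proposition~\ref{Pprtolint} is deduced from Proposition~\ref{Pjtolint}, not the other way around); the identity $\Ann_{\Ue_n}L=\bigcap_{m\ge n}\Ann_{\Ue_n}(\Ue_m v)$ is valid; and the characterization of intersections of finite-codimensional ideals of $\Ue_n$ as annihilators of integrable modules is valid. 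But at that point you are left with the annihilators of the cyclic $\Ue_m$-modules $\Ue_m v$, which are neither simple nor semisimple, so neither Duflo's theorem nor the classification of primitive ideals of $\Ue_m$ says anything about them, and your sketch supplies no substitute tool. The suggestion to exploit conjugates of $\frak{sl}(n)$ under finitary permutations does not address the real difficulty: \emph{why} the intersection over $m$ kills the noncofinite primitive ideals that occur at every finite stage.

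The paper's proof supplies exactly the two mechanisms your proposal is missing, and notably never touches an abstract simple $\Ue$-module. First, Lemma~\ref{Lalg} (whose nontrivial direction rests on \cite[p.~344, Corollary~1.8]{MCR} and is the one place where the uncountability of $\mathbb F$ enters --- a hypothesis your argument never uses, which is itself a warning sign) yields the identity $(\sqrt I)_n=\bigcap_{n'\ge n}\sqrt{I_{n'}}$; this replaces your intractable objects $\Ann_{\Ue_n}(\Ue_m v)$ by the radicals $\sqrt{I_{n'}}$, i.e.\ by intersections of \emph{primitive} ideals of $\Ue_{n'}$, to which the finite-rank structure theory does apply. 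Second, and this is the engine (Lemma~\ref{Lrad}): by \cite[Theorem~3.3]{PP1} there is a single $r$, independent of $n$, with $\Var(I_n)=\frak{sl}(n)^{\le r}$; by Joseph's theorem \cite{Jo} the associated variety of any primitive $J(n)\supset I_n$ is a nilpotent orbit closure lying in this rank-bounded variety; and the Barbasch--Vogan tableau description \cite{BV1} then forces the tableau of $J(n)$ to have one row of length at least $n-r$, so that $J(n)=\Ann_{\Ue_n}L(\lambda(n))$ with $L(\lambda(n))$ integrable over a root subalgebra conjugate to $\frak{sl}(n-r)$, whence $J(n)\cap\Ue_{n-r}$ is integrable. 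This uniform associated-variety bound, Joseph's theorem, and the tableau argument with its corank loss $r$ are precisely what ``collapses the noncofinite behaviour'' in the limit, and none of them (nor any equivalent) appears in your proposal; without them the proof has not begun.
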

To prove this proposition, we first need the following alternative description of $\sqrt I$.
\begin{lemma}\label{Lalg} Let $I\subset A$ be an ideal and let the cardinality of $\mathbb F$ exceed the $\mathbb F$-dimension of $A$. Then the following conditions on an element $z\in A$ are equivalent:

1) $z\in \sqrt I$,

2) for every $a\in A$ there is $k\in\mathbb Z_{>0}$, such that $(az)^k\in I$.\end{lemma}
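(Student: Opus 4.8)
The plan is to reduce everything to a statement about the quotient algebra $B:=A/I$ and then split into the two implications. By definition $\sqrt I$ is the intersection of all primitive ideals of $A$ containing $I$, so it is exactly the preimage in $A$ of the Jacobson radical $J(B)$ of $B$; write $\bar x$ for the image of $x\in A$ in $B$. Condition 1) then reads $\bar z\in J(B)$, while condition 2) reads: $\overline{az}$ is nilpotent in $B$ for every $a$. Since $\dim_{\mathbb F}B\le\dim_{\mathbb F}A<|\mathbb F|$, the crucial cardinality hypothesis survives the reduction (if $B$ is non-unital I would pass to its unitization, whose dimension still stays below $|\mathbb F|$ because $\mathbb F$ is uncountable). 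Each implication then exploits the large-field hypothesis through a different classical mechanism: a Nullstellensatz/density argument for 2)$\Rightarrow$1), and the nilpotency of the Jacobson radical (Amitsur) for 1)$\Rightarrow$2).

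For 2)$\Rightarrow$1) I would argue by contraposition. If $z\notin\sqrt I$, there is a simple $A$-module $M$ with $I\subseteq\Ann_AM=:P$ and $zM\ne0$. The key input is a Nullstellensatz-type fact: since $M$ is cyclic we have $\dim_{\mathbb F}M\le\dim_{\mathbb F}A<|\mathbb F|$, hence the division algebra $D:=\operatorname{End}_A(M)$ satisfies $\dim_{\mathbb F}D\le\dim_{\mathbb F}M<|\mathbb F|$; as $\mathbb F$ is algebraically closed, any $d\in D\setminus\mathbb F$ is transcendental over $\mathbb F$, which would force the $|\mathbb F|$ elements $(d-c)^{-1}$, $c\in\mathbb F$, to be $\mathbb F$-linearly independent and contradict $\dim_{\mathbb F}D<|\mathbb F|$; hence $D=\mathbb F$. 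By the Jacobson density theorem the image of $A$ in $\operatorname{End}_{\mathbb F}(M)$ is dense. Choosing $m$ with $zm\ne0$ and applying density to the single nonzero vector $zm$ yields $a\in A$ with $a(zm)=m$, so $(az)m=m$ and therefore $(az)^km=m\ne0$ for all $k$. Thus $(az)^k\notin P\supseteq I$ for every $k$, i.e. condition 2) fails.

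For 1)$\Rightarrow$2) the content is precisely that $J(B)$ is a nil ideal. Given $z\in\sqrt I$ and any $a$, the element $w:=\overline{az}$ lies in the two-sided ideal $J(B)$, and it suffices to show $w$ is nilpotent. I would prove this resolvent-style: each $1-cw$ with $c\in\mathbb F^{\times}$ is invertible, its inverse commutes with $w$, and since there are $|\mathbb F|$ resolvents $(1-cw)^{-1}$ inside the space $B$ of dimension $<|\mathbb F|$, they are $\mathbb F$-linearly dependent; clearing denominators in a nontrivial relation $\sum_i\alpha_i(1-c_iw)^{-1}=0$ produces the polynomial $\sum_i\alpha_i\prod_{j\ne i}(1-c_jw)$, which is nonzero (evaluate at $1/c_i$ for an index with $\alpha_i\ne0$) and annihilates $w$; hence $w$ is algebraic. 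Finally, writing the minimal polynomial of $w$ as $x^kh(x)$ with $h(0)\ne0$, the complementary idempotent $e\in\mathbb F[w]$ satisfies $1-e\in w\,\mathbb F[w]\subseteq J(B)$, so $e$ is invertible and therefore $e=1$; this forces $w^k=0$, i.e. $(az)^k\in I$.

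The main obstacle is this last implication: the passage from ``$w$ lies in the radical'' to ``$w$ is nilpotent'' is exactly where the uncountability of $\mathbb F$ (via $\dim_{\mathbb F}B<|\mathbb F|$) is indispensable, and the delicate point is deducing genuine nilpotency rather than mere algebraicity. This is handled by the idempotent argument, which crucially uses that $J(B)$ contains no nonzero idempotent and no invertible element. By contrast, once the identification $\operatorname{End}_A(M)=\mathbb F$ and Jacobson density are in hand, the direction 2)$\Rightarrow$1) is comparatively soft.
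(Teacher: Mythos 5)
Your proposal is correct, and its overall skeleton agrees with the paper's: both reduce to the quotient $B=A/I$, handle 1)\,$\Rightarrow$\,2) via the fact that the Jacobson radical of an algebra of $\mathbb F$-dimension less than $|\mathbb F|$ is nil, and handle 2)\,$\Rightarrow$\,1) by a simple-module argument. The differences lie in how each half is executed. For 1)\,$\Rightarrow$\,2) the paper simply cites \cite[p.~344, Corollary~1.8]{MCR} (essentially Amitsur's nil-radical theorem), whereas you reprove it: the resolvent trick (the $|\mathbb F|$ inverses $(1-cw)^{-1}$ must be linearly dependent, forcing $w$ to be algebraic) followed by the idempotent argument upgrading algebraicity to nilpotency. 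This is precisely the standard proof of the cited result, so your version makes the lemma self-contained --- a modest but real gain. For 2)\,$\Rightarrow$\,1) you invoke more machinery than necessary: computing $\operatorname{End}_A(M)=\mathbb F$ and appealing to the Jacobson density theorem are superfluous, since you apply density only to the single nonzero vector $zm$, and for one vector density is nothing but simplicity --- $A\cdot(zm)$ is a nonzero submodule of $M$, hence equals $M$, so some $a$ satisfies $a(zm)=m$. That is exactly the paper's argument: it picks $\bar a$ with $\bar a\cdot(\bar z\cdot m)=m$, takes $k$ with $(\bar a\bar z)^k=0$ as granted by 2), and derives the contradiction $\bar z\cdot m=0$. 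Note also that in this direction neither your argument nor the paper's uses the cardinality hypothesis at all; as you correctly emphasize, it is needed only for 1)\,$\Rightarrow$\,2).
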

\begin{proof} The fact that 1) implies 2) follows from~\cite[p.~344,~Corollary~1.8]{MCR}. We now show that 2) implies 1).

Let $z\in A$ satisfy 2), and let $\bar z$ be the image of $z$ in $A/I$. Assume to the contrary that there exists a simple $A/I$-module $M$ such that $\bar z\cdot M\ne0$. Pick $m\in M$  with $\bar z\cdot m\ne0$. There exists $\bar a\in A$ so that $\bar a\cdot(\bar z\cdot m)=m$. Let $k\in\mathbb Z_{>0}$ satisfy $(\bar a\bar z)^k=0$. Then
$$0=(\bar z(\bar a\bar z)^k)\cdot m=\bar z\cdot m\ne0.$$
This contradicts our assumption that $\bar z\cdot M\ne0$. Hence $\bar z\in\sqrt I$.
\end{proof}
Next, we prove
\begin{lemma}\label{Lrad} Let $I$ be an ideal of $\Ue$. Then there exists $r\in\mathbb Z_{>0}$ such that, for any $n\ge r$ and any primitive ideal $J(n)\subset \Ue_n$ containing $I_n$, the intersection $J(n)\cap\Ue_{n-r}$ is an integrable ideal in $\Ue_{n-r}$.\end{lemma}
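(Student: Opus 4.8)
The plan is to bound, uniformly in $n$, the associated variety of every primitive ideal $J(n)\supseteq I_n$, and then to convert this bound into integrability of the restriction to $\Ue_{n-r}$. I treat the essential case $I\neq0$ (the degenerate case $I=0$, where $I_n=0$ for all $n$, must be discussed separately). Since $\Ue=\bigcup_n\Ue_n$, I fix a nonzero $u\in I$ and choose $m$ with $u\in\Ue_m$. Then $u\in I_m\subseteq I_n\subseteq J(n)$ for every $n\ge m$ and every primitive $J(n)\supseteq I_n$, and I claim that $r:=2m$ works.

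By Duflo's theorem $J(n)=\Ann_{\Ue_n}L(\lambda)$ for a simple highest weight module $L(\lambda)$, so its associated variety $\Var(\Ue_n/J(n))\subseteq\frak{sl}(n)^*$ is a closed subset of the nilpotent cone, that is, a union of closures of nilpotent orbits. The symbol $\bar u:=\gr u$ is a nonzero element of $\Sa(\frak{sl}(m))\subseteq\Sa(\frak{sl}(n))$ lying in $\gr J(n)$; identifying $\frak{sl}(n)^*$ with $\frak{sl}(n)$ via the trace form and viewing $\bar u$ as a function, $\bar u$ factors through the projection $\pi_m\colon\frak{sl}(n)\to\frak{sl}(m)$ onto the top-left block. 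Hence $\pi_m$ maps $\Var(\Ue_n/J(n))$ into the proper closed subset $\{\bar u=0\}\subsetneq\frak{sl}(m)$. The crucial elementary observation is that for every $Y\in\frak{sl}(m)$ the square-zero matrix
$$N_Y:=\begin{pmatrix}Y&I_m\\-Y^2&-Y\end{pmatrix}\oplus 0_{n-2m}$$
has rank $m$ and top-left block $Y$, so $\pi_m$ restricted to the single orbit of type $(2^m,1^{n-2m})$ is already onto $\frak{sl}(m)$. Consequently, if $\Var(\Ue_n/J(n))$ contained any nilpotent of rank $\ge m$, then by an elementary dominance computation for partitions it would contain the whole orbit closure of type $(2^m,1^{n-2m})$, and $\pi_m$ would be surjective, contradicting $\bar u\neq0$. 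Therefore every nilpotent in $\Var(\Ue_n/J(n))$ has rank at most $m-1$, a bound independent of $n$.

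It remains to convert the rank bound into integrability of
$$J(n)\cap\Ue_{n-r}=\Ann_{\Ue_{n-r}}\bigl(L(\lambda)|_{\frak{sl}(n-r)}\bigr).$$
A nilpotent of rank $\le m-1$ has Jordan type with at most $m-1$ boxes outside the first column, so the non-dominant, non-integral part of the highest weight of $L(\lambda)$ is supported on boundedly many coordinates. I would then choose a highest weight presentation of $J(n)$, equivalently an appropriate ideal Borel subalgebra, adapted so that these finitely many exceptional directions all lie beyond the index $n-r=n-2m$. For such a presentation every simple root of the top-left $\frak{sl}(n-r)$ is an integrable direction of $L(\lambda)$, so $L(\lambda)|_{\frak{sl}(n-r)}$ is a sum of finite-dimensional $\frak{sl}(n-r)$-modules; being the annihilator of an integrable module, $J(n)\cap\Ue_{n-r}$ is then integrable, as required.

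The main obstacle is precisely this last conversion. One cannot argue through the size of the associated variety of $J(n)\cap\Ue_{n-r}$, since integrability is not detected by associated varieties (the zero ideal is integrable yet has full variety); the highest weight structure must genuinely be used. The delicate point is to make the choice of presentation uniform in $n$ and to reconcile it with the fixed embedding $\frak{sl}(n-r)\hookrightarrow\frak{sl}(n)$, so that the bounded non-integral content is really pushed into the last $2m$ coordinates. This is where the description of ideal Borel subalgebras and the combinatorics of bounded-rank primitive ideals recalled above enter the argument.
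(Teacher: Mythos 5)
Your first step is correct, and it is genuinely more self-contained than what the paper does there: where the paper simply quotes \cite[Theorem~3.3]{PP1} to obtain $\Var(J(n))\subset\Var(I_n)\subset\frak{sl}(n)^{\le r}$, you rederive a uniform rank bound directly from a single nonzero $u\in I\cap\Ue_m$. The mechanism is sound: $\gr u$ is a nonzero element of $\Sa(\frak{sl}(m))\cap\gr J(n)$ and, as a function on $\frak{sl}(n)^*\cong\frak{sl}(n)$, depends only on the (traceless part of the) top-left block; your square-zero matrices $N_Y$ show that the orbit of type $(2^m,1^{n-2m})$ projects onto all of $\frak{sl}(m)$; and a partial-sum computation shows that any nilpotent of rank $\ge m$ dominates $(2^m,1^{n-2m})$, so $\Var(J(n))$ (which is closed and stable under the adjoint group, two-sided ideals being ad-stable) would project onto $\frak{sl}(m)$, contradicting $\gr u\ne 0$. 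The restriction to $I\ne0$ is also harmless for the intended application, since for $I=0$ one has $\sqrt I=0$ ($\Ue$ is a domain) and Proposition~\ref{Pjtolint} is immediate.

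The genuine gap is everything after ``It remains to convert the rank bound into integrability,'' and you flag it yourself as the main obstacle: this conversion is precisely where the paper's proof does its real work, and without it the lemma is not proved. What is missing, concretely, is: (i) the Barbasch--Vogan parametrization of primitive ideals of $\Ue_n$ by a central character together with a left cell, recorded as a family of Young tableaux whose entries decrease along rows by positive integers; (ii) Joseph's theorem identifying $\Var(J(n))$ with the orbit closure whose conjugate partition has the row lengths of these tableaux as parts, so that your rank bound forces some row of length $l\ge n-m+1$; (iii) the construction (via the algorithm of \cite[Subsection~4.2]{PP2}) of a weight $\lambda(n)$ listing the tableau rows in \emph{consecutive} positions, with $J(n)=\Ann_{\Ue_n}L(\lambda(n))$, so that along the long row the weight is dominant integral on simple roots and $L(\lambda(n))$ is integrable over the corresponding block subalgebra $\frak{sl}(l)$; and (iv) conjugation of that block onto the standard corner copy of $\frak{sl}(n-r(n))$, legitimate again by ad-stability of $J(n)$. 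Your sketch asserts, in effect, the output of (i)--(iii) (``the non-dominant, non-integral part of the highest weight is supported on boundedly many coordinates''), but this is not a property of an arbitrary highest weight realization of $J(n)$: a priori the positions carrying the dominant integral chain are scattered, the corresponding roots of the chain subalgebra are not simple in $\frak{sl}(n)$, and local finiteness of $L(\lambda)$ over such a scattered root subalgebra is not automatic. One must pick the particular realization furnished by the tableaux, which rearranges the chain into consecutive coordinates while keeping the annihilator fixed. Finally, ``ideal Borel subalgebras'' cannot supply this choice: that is an $\frak{sl}(\infty)$-notion, whereas what is needed here is, for each fixed $n$, a statement about which of the highest weights with annihilator $J(n)$ is adapted to the fixed corner $\frak{sl}(n-r)$ --- i.e., exactly the Barbasch--Vogan/Joseph combinatorics that your proposal postpones.
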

\begin{proof} For any $n\ge2$ and any $r'\in\mathbb Z_{\ge0}$ we put $$\frak{sl}(n)^{\le r'}:=\{x\in\frak{sl}(n)\mid \exists\lambda\in\mathbb F : \operatorname{rk}(x-\lambda{\bf 1}_n)\le r'\},$$
where $\operatorname{rk}$ refers to the rank of a matrix, and ${\bf 1}_n$ is the identity $n\times n$-matrix, cf.~\cite{PP1}. For any ideal $J\subset\Ue_n$, we denote by $\Var(J)$ the algebraic variety in $\frak{sl}(n)^*$ corresponding to the graded ideal $$\operatorname{gr} J\subset\Sa^\cdot(\frak{sl}(n)).$$

According to~\cite[Theorem~3.3]{PP1}, there exists $r\in\mathbb Z_{\ge0}$ such that $\Var(I_n)=\frak{sl}(n)^{\le r}$; here we identify $\frak{sl}(n)$ and $\frak{sl}(n)^*$ via the Killing form. Since $J(n)\supset I_n$, we have $$\Var(J(n))\subset\Var(I_n)\subset\frak{sl}(n)^{\le r}.$$

A well known theorem of A.~Joseph claims that the associated variety of a primitive ideal equals the closure of a nilpotent coadjoint orbit, see~\cite{Jo}. In our case, the  nilpotent coadjoint orbits are identified with the conjugacy classes of nilpotent $n\times n$-matrices. These conjugacy classes are in 1-1 correspondence with the partitions of $n$: the partition attached to a conjugacy class comes from the Jordan normal form of a representative of this class. In this way we attach to $J(n)$ a partition of $n$. By $p(n)$ we denote the partition conjugate to that partition, and let $r(n)$ be the difference between $n$ and the maximal element of $p(n)$. It is crucial that the inclusion $\Var(J(n))\subset\frak{sl}(n)^{\le r}$ implies $r(n)\le r$, see~\cite[Subsection 4.3]{PP2}.

Next, we need an explicit classification of primitive ideals of $\Ue_n$, see~\cite{PP2}. Namely, a primitive ideal $J$ of $\Ue_n$ is determined by its intersection with the center of $\Ue_n$, together with the left cell in the integral Weyl group attached to this intersection, see for example~\cite[Section 6]{LO}. The intersection of $J$ with the center of $\Ue_n$ can be encoded by an unordered $n$-tuple $a_1',..., a_n'\in \mathbb F$. The integral Weyl group is isomorphic to a direct product of symmetric groups, and the factors of this direct product are parametrized by the equivalence classes of elements of $\{1,..., n\}$ with respect to the equivalence relation
$$i\sim j\Leftrightarrow a_i'-a_j'\in\mathbb Z.$$
The left cells of the integral Weyl group of $J$ are in 1-1 correspondence with the collections of Young tableaux so that the entries of the $i$th tableau are the elements of the $i$th equivalence class in $\{1,..., n\}$, see~\cite[p.~172]{BV1}.

Inserting $a_i'$ instead of $i$ in all these semistandard tableux, we attach to any primitive ideal $J$ the datum
$$\cup_{t\in\mathbb F/\mathbb Z}\{a^t_{1, 1}, a^t_{2, 1}, ..., a^t_{l_1^t, 1}; a^t_{1, 2}, a^t_{2, 2}, ..., a^t_{l_2^t, 2};...; a^t_{1, h_t},..., a^t_{l_{h_t}^t, h_t}\}$$
where

1) $h_t\ne0$ only for a finite subset of $\mathbb F/\mathbb Z$ and $\Sigma_{t, j} l_j^t=n$,

2) $a^t_{i, j}\in\mathbb F$ and the image of $a^t_{i, j}$ in $\mathbb F/\mathbb Z$ equals $t$,

3) $a^t_{i, j}-a^t_{i', j}\in\mathbb Z_{>0}$ for all $t\in\mathbb F/\mathbb Z, 1\le j\le h_t, 1\le i<i'\le l_j^t$,

4) $l_j^t\le l_{j'}^t$ for all $t\in\mathbb F/\mathbb Z, 1\le j<j'\le h_j$.

5) $a^t_{i, j}-a^t_{i, j'}\in\mathbb Z_{\ge0}$ for all $t\in\mathbb F/\mathbb Z, 1\le j\le h_t, 1\le i\le l_j^t, 1\le j'\le j$.\\
Here $h_t$ is the height of the $t$th tableau, $l_1^t,..., l^t_{h_t}$ are the row lengths of the $t$th tableau, and $a_{i,j}^t$ is the $i$th entry of the $j$th row of the $t$th tableau.

We now assume that the above datum corresponds to $J(n)$, and let $t_1,..., t_s$ be the elements of $\mathbb F/\mathbb Z$ for which $h_t\ne0$. Then the parts of $p(n)$ are all nonzero elements in the sequence$$l_1^{t_1}, l_2^{t_1},..., l_{h_{t_1}}^{t_1}, l_1^{t_2}, l_2^{t_2},..., l_{h_{t_2}}^{t_2},..., l_1^{t_s},..., l_{h_{t_s}}^{t_s}$$(repetitions are possible). Therefore, $$r(n)=n-\max\limits_{t\in\mathbb F/\mathbb Z,~1\le j\le h_t}l_j^t.$$

Denote by $\lambda^*(n)$ the $\frak{sl}(n)$-weight corresponding to the sequence
\begin{center}$a^{t_1}_{1, 1}, a^{t_1}_{2, 1}, ..., a^{t_1}_{l_1^{t_1}, 1}, \hspace{10pt}a^{t_1}_{1, 2}, a^{t_1}_{2, 2}, ..., a^{t_1}_{l_2^{t_1}, 2},\hspace{10pt}...,$\\
$a^{t_2}_{1, 1}, a^{t_2}_{2, 1}, ..., a^{t_2}_{l_1^{t_2}, 1}, \hspace{10pt}a^{t_2}_{1, 2}, a^{t_2}_{2, 2}, ..., a^{t_2}_{l_2^{t_2}, 2},\hspace{10pt}...,$\\{~}...{~}\\
$a^{t_s}_{1, 1}, a^{t_s}_{2, 1}, ..., a^{t_s}_{l_1^{t_s}, 1},\hspace{10pt} a^{t_s}_{1, 2}, a^{t_s}_{2, 2}, ..., a^{t_s}_{l_2^{t_s}, 2},\hspace{10pt}...,$\end{center} by $\rho$ the weight corresponding to the sequence
$$0, 1, 2,..., n-1,$$ and put $\lambda(n):=\lambda^*(n)+\rho$.

Then $J(n)=\Ann_{\Ue_n}L(\lambda(n))$ where $L(\lambda(n))$ is a simple $\frak{sl}(n)$-module with highest weight $\lambda(n)$. This can be seen for instance by following the algorithm in~\cite[Subsection 4.2]{PP2}. It is clear that $L(\lambda(n))|_{\frak{sl}(l_j^{t_i})}$ is an integrable module for each root subalgebra $\frak{sl}({l_j^{t_i}})$~of $\frak{sl}(n)$ corresponding to the subsequence $a^{t_i}_{1, j},..., a^{t_i}_{l_j^{t_i}, j}$. This implies that $J(n)\cap \Ue(\frak{sl}({l_j^t}))$ is an integrable ideal.

Let $t(n), j(n)$ be such that $$l_{j(n)}^{t(n)}=\max\limits_{t\in\mathbb F/\mathbb Z, 1\le j\le h_t}l_j^t.$$
Then, by the above, $J(n)\cap \Ue(\frak{sl}(l_{j(n)}^{t(n)}))$ is an integrable ideal of $\Ue(\frak{sl}(l_{j(n)}^{t(n)}))$. On the other hand, note that $r(n)=n- l_{j(n)}^{t(n)}$ and the Lie subalgebra $\frak{sl}(l_{j(n)}^{t(n)})$ is conjugate in $\frak{sl}(n)$ to the fixed Lie subalgebra $\frak{sl}(n-r(n))$ for which $$\Ue_{n-r(n)}=\Ue(\frak{sl}(n-r(n))).$$ Therefore, the ideals $J(n)\cap\Ue_{n-r(n)}\subset\Ue_{n-r(n)}$ and $J(n)\cap\Ue(\frak{sl}(l_{j(n)}^{t(n)}))\subset\Ue(\frak{sl}(l_{j(n)}^{t(n)}))$ are identified under conjugation. Consequently, $J(n)\cap\Ue_{n-r(n)}$ is an integrable ideal of $\Ue_{n-r(n)}$. Since $n-r\le n-r(n)$, it follows that $J(n)\cap\Ue_{n-r}$ is an integrable ideal of $\Ue_{n-r}$.\end{proof}
\begin{proof}[Proof of Proposition~\ref{Pjtolint}] Lemma~\ref{Lrad} implies the existence of $r\ge0$ so that
$\sqrt{I_{n+r}}\cap\Ue_{n}$ is an integrable ideal of $\Ue_{n}$ for any $n\ge2$.
 Next, Lemma~\ref{Lalg} shows that $(\sqrt I)_{n}=\cap_{n'\ge n}\sqrt{I_{n'}}$. However, $$\cap_{n'\ge n}\sqrt{I_{n'}}=(\cap_{n'\ge n+r}\sqrt{I_{n'}})\cap\Ue_n.$$
Being integrable in $\Ue_{n'-r}$, the ideal $\sqrt{I_{n'}}\cap\Ue_{n'-r}$ is an intersection of ideals of finite codimension in $\Ue_{n'-r}$, hence $(\sqrt I)_n=(\cap_{n'\ge n+r}\sqrt{I_{n'}})\cap\Ue_n$ is an intersection of ideals of finite codimension in $\Ue_n$. This means that the ideal $(\sqrt I)_n$ is integrable for $n\ge2$.
\end{proof}

\subsection{Locally integrable ideals and p.l.s..}\label{SSprt2} Let $I$ be a locally integrable  ideal of $\Ue$. As we pointed out, for every $n\ge2$, $I_n\subset\Ue_n$ is an intersection of ideals of finite codimension in $\Ue_n$. Therefore, $I_n$ is the intersection of annihilators of finite-dimensional $\frak{sl}(n)$-modules. Since any finite-dimensional $\frak{sl}(n)$-module is semisimple, it follows that $I_n$ is an intersection of annihilators of simple finite-dimensional $\Ue_n$-modules.

Let $\Irr_n$ denote the set of isomorphism classes of simple finite-dimensional $\frak{sl}(n)$-modules. We put $$Q(I)_n:=\{[M]\in\Irr_n\mid (I\cap\Ue_n)\subset\Ann_{\Ue_n}M\}$$
where $M$ stands for a simple finite-dimensional $\frak{sl}(n)$-module and $[M]$ denotes the isomorphism class of $M$. For any $n'\ge n$ and any subset $Q_{n'}\subset\Irr_{n'}$ we denote by $(Q_{n'})|_{\frak{sl}(n)}$ the set of isomorphism classes of all simple $\frak{sl}(n)$-submodules of the $\frak{sl}(n')$-modules $M$ with $[M]\in Q_{n'}$. It is clear that $Q(I)_{n'}|_{\frak{sl}(n)}\subset Q(I)_n$. This leads to the following definition.

Let $Q=\{Q_2, Q_3,..., Q_n,...\}$ be a collection of subsets $Q_2\subset \Irr_2, Q_3\subset\Irr_3,..., Q_n\subset\Irr_n,...$. We call $Q$ a {\it precoherent local system} ({\it p.l.s.} for short) if $Q_{n'}|_{\frak{sl}(n)}\subset Q_n$ for all $n'\ge n\ge 2$. By definition, $Q$ is a {\it coherent local system} ({\it c.l.s.} for short) if $Q_{n'}|_{\frak{sl}(n)}=Q_n$ for all $n'\ge n\ge 2$. The notion of c.l.s. has been introduced by A.~Zhilinskii~\cite{Zh1}.

The collection $\{Q_n(I)\}_{n\ge2}$ defined above is immediately seen to be a p.l.s.. We denote this p.l.s. by $Q(I)$. Conversely,
given a p.l.s. $Q$, we assign to $Q$ the ideal
$$I(Q):=\cup_{n'\ge n}(\cap_{[M]\in Q_{n'}}\Ann_{\Ue_{n'}}M)\subset\Ue.$$
Clearly, $I(Q)$ is a locally integrable ideal $I$ of $\Ue$. Moreover, $I(Q(I))=I$ for any locally integrable ideal $I\subset\Ue$. This reduces Proposition~\ref{Plinttoint} to the following statement.
\begin{proposition}\label{Pdef}If $Q$ is a p.l.s. then $I(Q)$ is an integrable ideal.\end{proposition}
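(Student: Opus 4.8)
The plan is to exhibit, for an arbitrary p.l.s. $Q$, an integrable $\Ue$-module $M$ with $\Ann_\Ue M=I(Q)$; by the definition of integrable ideal this is exactly what is needed. For a local system $Q'$ write $J_n(Q'):=\bigcap_{[N]\in Q'_n}\Ann_{\Ue_n}N$, so that $I(Q')=\bigcup_n J_n(Q')$. The first, representation-theoretic, reduction is this: if $M$ is integrable then $M|_{\frak{sl}(n)}$ is a direct sum of finite-dimensional simple modules, and the collection $S(M)$ of isomorphism classes of the $\frak{sl}(n)$-constituents of $M$ is automatically a c.l.s., since $M|_{\frak{sl}(n)}=(M|_{\frak{sl}(n')})|_{\frak{sl}(n)}$ forces $S(M)_{n'}|_{\frak{sl}(n)}=S(M)_n$. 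Moreover $\Ann_\Ue M\cap\Ue_n=\Ann_{\Ue_n}(M|_{\frak{sl}(n)})=J_n(S(M))$, whence $\Ann_\Ue M=I(S(M))$. Thus it suffices to produce a c.l.s. $S$ with $J_n(S)=I(Q)\cap\Ue_n$ for every $n$, and to realize every c.l.s. as the support of an integrable module.

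My next step is to replace $Q$ by a saturated and then by a coherent local system with the same associated ideal. Since $I(Q)$ is locally integrable, $I(Q)\cap\Ue_n$ is an intersection of annihilators of finite-dimensional modules, so the p.l.s. $\widehat Q$ given by $\widehat Q_n:=\{[N]\in\Irr_n:\Ann_{\Ue_n}N\supseteq I(Q)\cap\Ue_n\}$ is the largest local system with $J_n(\widehat Q)=I(Q)\cap\Ue_n$, and $I(\widehat Q)=I(Q)$. The trouble is that $\widehat Q$ need not be coherent: the compatibility $J_{n+1}(\widehat Q)\cap\Ue_n=J_n(\widehat Q)$ forced by local integrability is an equality of \emph{intersections of annihilators}, and distinct families of finite-dimensional modules can share the same intersection of annihilators. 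I would therefore pass to a coherent $S$ with $S_n\subseteq\widehat Q_n$ and $J_n(S)=J_n(\widehat Q)$; coherent subsystems of $\widehat Q$ are closed under unions, so a largest one exists, and the point is to prove that its associated ideal is already $I(Q)$. This is the combinatorial core: using the explicit parametrization of primitive ideals of $\Ue_n$ by a central character together with a left cell, and the behaviour of this datum under the branching $\frak{sl}(n)\subset\frak{sl}(n+1)$, one shows that the dominant weights needed to realize $I(Q)\cap\Ue_n$ can be chosen so as to propagate coherently to all higher levels.

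With such an $S$ in hand, the realization step rests on Zhilinskii's classification of coherent local systems of $\frak{sl}(\infty)$~\cite{Zh1, Zh2, Zh3}, which presents every c.l.s. as a finite or countable union of irreducible ones. Each irreducible c.l.s. that occurs is the support of one of the explicit integrable modules already in play, namely $V_{Y_l}\otimes(\Sa^\cdot(V))^{\otimes x}\otimes(\Lambda^\cdot(V))^{\otimes y}\otimes(V_{Y_r})_*$, whose annihilator is $I(x,y,Y_l,Y_r)$. Taking the direct sum of these modules over the union yields an integrable module $M$ with $S(M)=S$, so that $\Ann_\Ue M=I(S)=I(Q)$; in particular $I(Q)$ is an intersection of the integrable ideals $I(x,y,Y_l,Y_r)$ of~\cite[Proposition 4.8]{PP2} and is itself integrable.

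The step I expect to be the main obstacle is the passage from the saturated p.l.s. $\widehat Q$ to a coherent $S$ with the same associated ideal. Coherence of the chain of ideals $\{I(Q)\cap\Ue_n\}$ does not by itself yield coherence of any family of finite-dimensional modules generating them, so one cannot simply stabilize or intersect the restricted sets $\widehat Q_{n'}|_{\frak{sl}(n)}$; pinning down exactly which annihilators are redundant, and that the surviving ones extend to a coherent system, requires the finite-dimensional combinatorics of primitive ideals and their restrictions, which is precisely the content of the combinatorial subsection.
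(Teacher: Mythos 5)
Your two reductions are sound: the observation that the support $S(M)$ of an integrable module is a c.l.s.\ with $\Ann_\Ue M=I(S(M))$, and the passage from $Q$ to the saturated p.l.s.\ $\widehat Q$ with $I(\widehat Q)=I(Q)$, are both correct, and they bring you to essentially the same point the paper reaches when it reduces Proposition~\ref{Pdef} to Proposition~\ref{Ppcls2}: one must produce a c.l.s.\ whose associated ideal equals $I(Q)$. (The realization step at the end is also fine, since the integrability of $I(Q')$ for a c.l.s.\ $Q'$ is a known result of~\cite{PP1}, which the paper likewise just cites.) The problem is that the step you yourself flag as ``the main obstacle'' --- that the largest coherent subsystem $S\subseteq\widehat Q$ satisfies $J_n(S)=J_n(\widehat Q)$ --- is exactly the entire mathematical content of the proposition, and you give no argument for it. This is not a routine verification: the paper needs Proposition~\ref{Pmain}, its combinatorial reformulation Proposition~\ref{Lgts2}, and three auxiliary lemmas (Lemmas~\ref{Lgts}, \ref{Lgts25}, \ref{Lfir}) to establish it. A proof with this step deferred is a proof of nothing, since everything before and after it is soft.

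Two further points of comparison. First, the toolkit you propose for the missing step (central characters and left cells, and their behaviour under branching) is misdirected: that machinery is what the paper uses to prove the \emph{other} half of Theorem~\ref{T1b}, namely Proposition~\ref{Pprtolint} via Lemma~\ref{Lrad}. Once one is working with a p.l.s., all primitive-ideal theory has been used up; what remains is pure Gelfand--Tsetlin combinatorics of $\mathbb Z$-partitions and the dominance order $\succ$. Second, the paper's actual route around your obstacle is worth internalizing, because it sidesteps the ``equality at every level'' formulation you chose: it writes $Q=\cap_{\lambda\notin Q}Q^\vee(\lambda)$ as an intersection of maximal point-avoiding p.l.s., proves that each $Q^\vee(\lambda)$ agrees \emph{in high levels} (is equivalent) with an explicitly described c.l.s.\ $Q(\lambda)$ --- the content of Proposition~\ref{Lgts2}, that for $\sharp\mu\ge4\sharp\lambda$ dominance $\mu\succ\lambda$ is detected by the finitely many inequalities $\mu_k-\mu_{\sharp\mu-\sharp\lambda+l}\ge\lambda_k-\lambda_l$ --- and then invokes the artinianity of the lattice of c.l.s.~\cite{Zh1} to replace the possibly infinite intersection $\cap_\lambda Q(\lambda)$ by a finite one. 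Incidentally, once Proposition~\ref{Ppcls2} is known, your formulation does follow: a c.l.s.\ equivalent to $\widehat Q$ can be truncated downward to a c.l.s.\ contained in $\widehat Q$ with the same ideal, so the maximal coherent subsystem works; but this is a corollary of the paper's result, not a route to it.
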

\begin{remark}One can show that $Q(I)$ is a c.l.s. whenever $I$ is an integrable ideal. Therefore, Proposition~\ref{Pdef} implies that $Q(I)$ is in fact a c.l.s. under the weaker assumption that $I$ is a locally integrable ideal of $\Ue$.\end{remark}

We say that two p.l.s. $Q, Q'$ are {\it equivalent} if there exists $n\ge 2$ such that $Q_{n'}=Q'_{n'}$ for all $n'\ge n$. It is clear that if $Q$ and $Q'$ are equivalent, then $I(Q)=I(Q')$. It is known that if $Q$ is a c.l.s., then $I(Q)$ is an integrable ideal~\cite{PP1}. Thus, in order to prove Proposition~\ref{Pdef}, it suffices to prove the following.
\begin{proposition}\label{Ppcls2}For any p.l.s. $Q$ there exists a c.l.s. $Q'$ such that $Q$ and $Q'$ are equivalent.\end{proposition}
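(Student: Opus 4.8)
The plan is to reduce the statement to a single stabilization phenomenon and then to establish it by combining the combinatorics of branching with a well-quasi-order argument. First I would reformulate the goal as \emph{eventual coherence}: it suffices to produce an integer $N$ with $Q_{n'}|_{\frak{sl}(n)}=Q_n$ for all $n'\ge n\ge N$. Indeed, given such an $N$, I would set $Q'_n:=Q_n$ for $n\ge N$ and $Q'_n:=Q_N|_{\frak{sl}(n)}$ for $2\le n<N$; a direct check, using only transitivity of restriction $(Q_{n'}|_{\frak{sl}(n')})|_{\frak{sl}(n)}=Q_{n'}|_{\frak{sl}(n)}$ and eventual coherence, shows that $Q'$ is a c.l.s., while $Q'$ and $Q$ agree for $n\ge N$ and are therefore equivalent.

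To find $N$ I would pass to the stable parametrization of $\Irr_n$ by pairs of Young diagrams $(\mu,\nu)$, valid once $\ell(\mu)+\ell(\nu)<n$, writing $V_{(\mu,\nu)}(n)$ for the corresponding simple module ($\mu$ and $\nu$ recording the positive and negative parts of the highest weight). In this parametrization the branching rule for $\frak{sl}(n)\subset\frak{sl}(n+1)$ is independent of $n$ in the stable range, is governed by horizontal-strip removals, and enjoys two features I would exploit: every constituent $(\mu',\nu')$ of $V_{(\mu,\nu)}(n+1)|_{\frak{sl}(n)}$ satisfies $\mu'\subseteq\mu$ and $\nu'\subseteq\nu$, and $V_{(\mu,\nu)}(n)$ itself always occurs (the summand on which the extra basis vector of $V(n+1)$ acts trivially). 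Writing $S_n\subseteq\mathbb Y\times\mathbb Y$ for the set of pairs with $V_{(\mu,\nu)}(n)\in Q_n$, this self-restriction property together with the p.l.s. condition forces $S_{n+1}\subseteq S_n$. I would then let $\bar Q$ be the set of \emph{persistent} pairs, those lying in $S_m$ for all large $m$, and define $Q'$ by $Q'_n=\{V_{(\mu,\nu)}(n):(\mu,\nu)\in\bar Q,\ \ell(\mu)+\ell(\nu)<n\}$. Since restriction only shrinks diagrams, $\bar Q$ is an order ideal (downward closed for componentwise inclusion), and this downward closure together with self-restriction shows directly that $Q'$ is a c.l.s.

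The heart of the proof is the equivalence $Q\sim Q'$, i.e. that for all large $n$ every $(\mu,\nu)\in S_n$ is already persistent; equivalently, the \emph{transient} pairs, which appear in some $S_n$ but leave all later $S_m$, occur only up to a bounded level. For a fixed number of boxes $d=|\mu|+|\nu|$ there are finitely many pairs, and the sets $S_n\cap\{|\mu|+|\nu|=d\}$ form a descending chain inside this finite set, hence stabilize beyond some level $N_d$; so transience in each fixed degree is eliminated. The subtlety is that $N_d$ need not be bounded in $d$ (it is not already for the coherent system of $\Lambda^\cdot(V)$, whose persistent pairs are the single columns $(1^k,\emptyset)$), so one cannot take $N=\sup_d N_d$. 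To control this I would use that $(\mathbb Y\times\mathbb Y,\subseteq)$ is a well-quasi-order: the down-closures $\langle S_n\rangle$ form a descending chain of order ideals and thus stabilize, and the complement of the limit ideal has only finitely many minimal elements. I would combine this finite description with reachability: any $(\mu',\nu')\subseteq(\mu,\nu)$ is reached from $(\mu,\nu)$ by at most $\ell(\mu)+\ell(\nu)$ successive horizontal-strip removals, so a witness $(\mu,\nu)\in S_n$ above $(\mu',\nu')$ forces $(\mu',\nu')\in S_{n-\ell(\mu)-\ell(\nu)}$; extracting (via the well-quasi-order) an increasing sequence of witnesses of controlled height lets one push a supposed transient pair into arbitrarily large $S_m$, contradicting transience.

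The main obstacle is exactly this last point: bounding the heights $\ell(\mu)+\ell(\nu)$ of the witnesses, equivalently ruling out transient modules whose height grows with the level. Per-degree stabilization and the well-quasi-order dispose of the horizontal growth of the diagrams, but vertical growth interacts with the sliding stable range (a pair of height $h$ is only visible from level $h+1$ onward), and it is the compatibility of these two scales that requires care. I expect this to be the only genuinely delicate step; the reduction of the first paragraph and the verification that $Q'$ is a c.l.s. are routine once the branching rule and the downward closure of $\bar Q$ are in hand.
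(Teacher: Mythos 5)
Your reduction to eventual coherence, the passage to the persistent pairs $\bar Q$, the monotonicity $S_{n+1}\subseteq S_n$, and the downward closure of $\bar Q$ are all sound (modulo routine care with the fact that the pair parametrization of $\Irr_n$ is not injective on the range $\ell(\mu)+\ell(\nu)<n$; for instance $((1^{n-1}),\emptyset)$ and $(\emptyset,(1))$ label the same $\frak{sl}(n)$-module). But the argument has a genuine gap exactly where you flag it, and that gap is not a technicality to be smoothed out: it is the entire content of Proposition~\ref{Ppcls2}. Membership of $(\mu',\nu')$ in the stabilized down-closure $\bigcap_n\langle S_n\rangle$ produces, for each large $n$, only a witness $(\mu,\nu)\in S_n$ containing $(\mu',\nu')$, and your reachability step then places $(\mu',\nu')$ in $S_{n-h}$, where $h$ is the minimal number of horizontal-strip removals needed, i.e.\ the maximal column length of the skew shapes $\mu/\mu'$ and $\nu/\nu'$. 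This quantity is governed by the height of the \emph{witness}, not of $(\mu',\nu')$: to pass from $((1^h),\emptyset)$ down to $(\emptyset,\emptyset)$ one needs exactly $h$ one-step restrictions. Since a pair in $S_n$ may have height comparable to $n$, the levels $n-h$ need not tend to infinity, and persistence of $(\mu',\nu')$ does not follow. The well-quasi-order gives stabilization of down-closures, but it says nothing about this interaction between heights and levels, which is precisely what must be proved.

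It is worth seeing how the paper handles this, because the parallel makes the size of your gap concrete. The paper writes $Q=\cap_{\lambda\notin Q}Q^\vee(\lambda)$ and invokes Zhilinskii's theorem that the lattice of c.l.s.\ is artinian --- this plays the same role as your well-quasi-order argument, reducing everything to finitely many constraints. The heart, however, is Proposition~\ref{Pmain}: the maximal p.l.s.\ $Q^\vee(\lambda)$ avoiding a given $\lambda$ is equivalent to an \emph{explicit} c.l.s.\ $Q(\lambda)$. This rests on Proposition~\ref{Lgts2}: for $\sharp\mu\ge 4\sharp\lambda$, domination $\mu\succ\lambda$ is equivalent to the inequalities $\mu_k-\mu_{\sharp\mu-\sharp\lambda+l}\ge\lambda_k-\lambda_l$ for $1\le k<l\le\sharp\lambda$. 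The effective constant $4$ is exactly the reconciliation of your ``two scales,'' and it is obtained through the nontrivial constructions of Lemmas~\ref{Lgts} and~\ref{Lgts25} together with Lemma~\ref{Lfir}, which handles the tall-witness phenomenon by showing that any $\mu$ with a large enough spread $\mu_i-\mu_{\sharp\mu-i+1}\ge\lambda_1-\lambda_{\sharp\lambda}$ in its middle range automatically dominates $\lambda$. To complete your proof you would need an analogous effective statement --- e.g.\ that witnesses above $(\mu',\nu')$ can always be replaced by witnesses whose height is bounded in terms of $(\mu',\nu')$ and $Q$ alone. As it stands, the proposal is a correct reduction together with an accurate diagnosis of the difficulty, but the difficult step itself is unproved.
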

Next, we reduce Proposition~\ref{Ppcls2} to a purely combinatorial statement. We call a nonincreasing sequence $\lambda_1\ge\lambda_2\ge...\ge\lambda_n$ of integers a {\it $\mathbb Z$-partition} of {\it width} $\sharp\lambda:=n$ ($\mathbb Z$-partitions of width $n$ are precisely the integral dominant weights of $\frak{gl}(n)$). We then identify $\Irr_n$ with the set of $\mathbb Z$-partitions of width $n$ modulo the equivalence relation $$(\lambda_1\ge ...\ge \lambda_n)\sim(\lambda_1+D\ge...\ge\lambda_n+D), \hspace{10pt}D\in\mathbb Z.$$
By $V_\lambda$ we denote a simple finite-dimensional $\frak{sl}(n)$-module corresponding to the $\mathbb Z$-partition $\lambda$ with $\sharp\lambda=n$. By a slight abuse of notation we write $\lambda\in\Irr_{\sharp\lambda}$.

The classical Gelfand-Tsetlin rule claims that, for $\mathbb Z$-partitions $\lambda$ and $\mu$ with $\sharp\lambda=n$, $\sharp\mu=n-1$, the following conditions are equivalent:

$\bullet$ $\Hom_{\frak{sl}(n-1)}(V_\mu, V_\lambda|_{\frak{sl}(n-1)})\ne 0$,

$\bullet$ there exists $D\in\mathbb Z$ such that $\lambda_1\ge\mu_1+D\ge\lambda_2\ge\mu_2+D\ge...\ge\lambda_{n-1}\ge\mu_{n-1}+D\ge\lambda_n$.\\
We write $\lambda>\mu$ whenever these conditions hold.

For general $\mathbb Z$-partitions $\lambda$ and $\mu$ with $\sharp\lambda\ge\sharp\mu$, the Gelfand-Tsetlin rule implies that the following conditions are equivalent:

$\bullet$ $\Hom_{\frak{sl}({\sharp \mu})}(V_\mu, V_\lambda|_{\frak{sl}(\sharp\mu)})\ne 0$,

$\bullet$ there exists a sequence of $\mathbb Z$-partitions $\lambda=\lambda^0, \lambda^1,..., \lambda^{n-m}=\mu$ such that
$$\lambda=\lambda^0>\lambda^1>...>\lambda^{n-m}=\mu$$
and $\sharp \lambda^i=\sharp\lambda-i$. We write $\lambda\succ\mu$ whenever these latter conditions hold, and say that $\lambda$ {\it dominates} $\mu$.

We can now restate the definitions of c.l.s. and p.l.s. as follows.
Let $Q=\{Q_2, Q_3, ..., Q_n\}$ be a collection of subsets $Q_2\subset\Irr_2, Q_3\subset\Irr_3,..., Q_n\subset\Irr_n,...$. Then\\
a) the following conditions are equivalent:

$\bullet$ $Q$ is a p.l.s.,

$\bullet$ for all $\lambda, \mu$ such that $\lambda\succ\mu$ and $\lambda\in Q_{\sharp\lambda}$, we have $\mu\in Q_{\sharp\mu}$;\\
b) the following conditions are equivalent:

$\bullet$ $Q$ is a c.l.s.,

$\bullet$ $Q$ is a p.l.s. and for every $\mu\in Q_{\sharp\mu}$ there is $\lambda\in Q_{\sharp\mu+1}$ such that $\lambda\succ\mu$.\\
We denote by $Q^\vee(\lambda)$ the largest p.l.s. $Q$ which does not contain (the equivalence class of) a given $\mathbb Z$-partition $\lambda$. It is clear that

$Q^\vee(\lambda)_n$ consists of all $\mathbb Z$-partitions of width $n$ for $n<\sharp\lambda$,

$Q^\vee(\lambda)_{\sharp\lambda}$ consists of all $\mathbb Z$-partitions of width $\sharp\lambda$ except $\lambda$,

$Q^\vee(\lambda)_n$ consists of all $\mathbb Z$-partitions $\mu$ of width $n$ such that $\mu\not\succ\lambda$ for all $n>\sharp\lambda$.\\
We are now ready to state
\begin{proposition}\label{Pmain}For any $\mathbb Z$-partition $\lambda$, the p.l.s. $Q^\vee(\lambda)$ is equivalent to the c.l.s.
$$Q(\lambda):=\cup_{1\le k< l\le \sharp\lambda} Q(k, l, \lambda_k-\lambda_l)$$
where $Q(k, l, \lambda_k-\lambda_l)$ is the c.l.s. defined by the formula
$$Q(k, l, \lambda_k-\lambda_l)_m:=\{\mu\in\Irr_m\mid \mu_k-\mu_{m-\sharp\lambda+l}<\lambda_k-\lambda_l\}.$$
\end{proposition}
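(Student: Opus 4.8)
The plan is to translate both precoherent local systems into explicit systems of inequalities on the parts of a $\mathbb Z$-partition $\mu$ of large width, and then to check that these systems coincide once the width is large enough. Write $N:=\sharp\lambda$. For $m>N$ the description of $Q^\vee(\lambda)$ recalled before the statement says that $\mu\in Q^\vee(\lambda)_m$ precisely when $\mu\not\succ\lambda$, so everything reduces to finding an explicit combinatorial criterion for the domination relation $\mu\succ\lambda$, which (by the Gelfand-Tsetlin rule) is the same as $\Hom_{\mathfrak{sl}(N)}(V_\lambda,V_\mu|_{\mathfrak{sl}(N)})\ne0$.

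First I would prove the following reformulation of domination: for $\mathbb Z$-partitions with $\sharp\mu=m\ge\sharp\lambda=N$, one has $\mu\succ\lambda$ if and only if there exists $D\in\mathbb Z$ with
$$\mu_i\ge\lambda_i+D\ge\mu_{i+m-N},\qquad i=1,\dots,N.$$
For necessity, choose integral representatives making a chain $\mu=\lambda^0>\lambda^1>\dots>\lambda^{m-N}=\lambda$ shift-free; then the left interlacing inequalities telescope to $\mu_i\ge\lambda_i$ and the right ones to $\lambda_i\ge\mu_{i+m-N}$, the overall shift being recorded by $D$. For sufficiency I would induct on $m-N$, constructing from $\mu$ one interlacing step down a partition of width $m-1$ whose parts still satisfy the corner inequalities relative to $\lambda$; this is exactly nonemptiness of the relevant Gelfand-Tsetlin polytope. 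I expect this inductive construction to be the main technical obstacle, with the bookkeeping of the shift $D$ and of the index shift $i\mapsto i+m-N$ as the delicate points.

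Next I would convert the ``$\exists D$'' criterion into a shift-free one. As all parts are integers, an integer $D$ in the required range exists if and only if
$$\max_{1\le i\le N}\bigl(\mu_{i+m-N}-\lambda_i\bigr)\le\min_{1\le j\le N}\bigl(\mu_j-\lambda_j\bigr),$$
equivalently $\mu_{i+m-N}-\mu_j\le\lambda_i-\lambda_j$ for all $i,j\in\{1,\dots,N\}$. I would then note that once $m\ge2N-1$ the inequalities with $i\le j$ hold automatically: for $i=j$ they read $\mu_{i+m-N}\le\mu_i$, and for $i<j$ one has $i+m-N\ge j$, so $\mu_{i+m-N}\le\mu_j$ while the right-hand side is nonnegative. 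Hence for $m\ge2N-1$ domination is controlled solely by the inequalities with $i>j$; renaming $(j,i)=(k,l)$ with $k<l$, these read $\mu_k-\mu_{l+m-N}\ge\lambda_k-\lambda_l$, which is exactly the negation of membership in $Q(k,l,\lambda_k-\lambda_l)_m$. Therefore $\mu\in Q^\vee(\lambda)_m$, i.e. $\mu\not\succ\lambda$, holds iff $\mu_k-\mu_{l+m-N}<\lambda_k-\lambda_l$ for some $k<l$, i.e. iff $\mu\in Q(\lambda)_m$; so $Q^\vee(\lambda)_m=Q(\lambda)_m$ for all $m\ge2N-1$, giving the asserted equivalence.

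It remains to confirm that $Q(\lambda)$ is a c.l.s.. Each $Q(k,l,d)$ is a p.l.s. because the (shift-invariant) quantity $\mu_k-\mu_{m-N+l}$ cannot increase under a single interlacing step: under $\mu\succ\nu$ with shift $D$ one has $\nu_k\le\mu_k-D$ and $\nu_{(m-1)-N+l}\ge\mu_{m-N+l}-D$. Moreover $Q(k,l,d)$ is a c.l.s., since any $\mu\in Q(k,l,d)_m$ admits a dominating partition of width $m+1$ obtained by inserting one part while leaving the two relevant entries unchanged, and this partition lies in $Q(k,l,d)_{m+1}$. Finally, a finite union of c.l.s. is again a c.l.s.: the union of p.l.s. is a p.l.s. (the p.l.s. condition is downward closure under $\succ$), and the upward condition defining a c.l.s. is inherited by any member of the union. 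Hence $Q(\lambda)=\bigcup_{1\le k<l\le N}Q(k,l,\lambda_k-\lambda_l)$ is a c.l.s. equivalent to $Q^\vee(\lambda)$, completing the proof.
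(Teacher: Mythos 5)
Your proposal is correct, but it takes a genuinely different route from the paper. The paper also reduces everything to showing $Q^\vee(\lambda)_m=Q(\lambda)_m$ for large $m$, but its combinatorial core (Proposition~\ref{Lgts2}, with the threshold $\sharp\mu\ge4\sharp\lambda$) is proved by a rather intricate argument: three preparatory lemmas (a two-sided squeeze lemma, an equality-case lemma, and a ``single large gap'' lemma), followed by a proof by contradiction that truncates $\mu$ to a two-parameter family $\mu^{r,s}$, locates a transition point where condition 2) first fails, and modifies one part of the partition to contradict the equality-case lemma. Your route instead rests on the classical two-sided interlacing criterion for iterated Gelfand--Tsetlin branching: $\mu\succ\lambda$ iff there is a single integer shift $D$ with $\mu_i\ge\lambda_i+D\ge\mu_{i+m-N}$ for all $i$; you then eliminate $D$ by an integer Helly argument ($\max\le\min$), and observe that the inequalities indexed by $i\le j$ are automatic once $m\ge2N-1$, leaving exactly the negation of membership in $Q(\lambda)_m$. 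This is cleaner, yields the sharper threshold $2N-1$ in place of $4N$ (irrelevant for equivalence, but aesthetically better), and you additionally verify the assertions the paper leaves implicit, namely that each $Q(k,l,d)$ and the finite union $Q(\lambda)$ are c.l.s. The one step you leave as a sketch --- sufficiency of the interlacing criterion --- is standard and completes without difficulty: given $\mu_i\ge\lambda_i+D\ge\mu_{i+m-N}$, the partitions $\lambda^j$ with $\lambda^j_i:=\max(\lambda_i+D,\mu_{i+j})$ (reading $\lambda_i+D$ as absent for $i>N$) form an explicit interlacing chain from $\mu$ down to the shift of $\lambda$, so your anticipated ``main technical obstacle'' is in fact routine.
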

The next subsection is devoted to the proof of Proposition~\ref{Pmain}. We conclude this subsection by showing how Proposition~\ref{Pmain} implies Proposition~\ref{Ppcls2}, and therefore ultimately Proposition~\ref{Pjtolint}.
\begin{proof}[Proof of Proposition~\ref{Ppcls2}]
Let $Q$ be a p.l.s. Then, clearly

$$Q=\cap_{\lambda\not\in Q} Q^\vee(\lambda).$$
According to Proposition~\ref{Pmain}, a p.l.s. of the form $Q^\vee(\lambda)$ is equivalent to the c.l.s. $Q(\lambda)$. The lattice of c.l.s. is artinian~\cite{Zh1}, and therefore we conclude that $Q$ is equivalent to
a c.l.s. $$Q(\lambda_1)\cap....\cap Q(\lambda_s)$$
for some finite set of elements $\lambda_1,..., \lambda_s\not\in Q$.\end{proof}

\subsection{Combinatorics of $\mathbb Z$-partitions}\label{SScomb}
It is clear that Proposition~\ref{Pmain} is implied by the following.
\begin{proposition}\label{Lgts2}Let $\lambda$ and $\mu$ be $\mathbb Z$-partitions such that $\sharp\mu\ge 4\sharp\lambda$. Then  the following conditions are equivalent:

1) $\mu\succ\lambda$,

2) $\mu_k-\mu_{\sharp\mu-\sharp\lambda+l}\ge\lambda_k-\lambda_l$ for any $1\le k<l\le\sharp\lambda$.
\end{proposition}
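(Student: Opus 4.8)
The plan is to recognize condition~1) as a branching-multiplicity statement and to reduce the whole equivalence to a single ``box'' criterion for iterated Gelfand--Tsetlin interlacing. Write $n:=\sharp\mu$ and $s:=\sharp\lambda$, so that $n\ge 4s$. By the reformulation recalled in Subsection~\ref{SSprt2}, $\mu\succ\lambda$ is the same as $\Hom_{\frak{sl}(s)}(V_\lambda,V_\mu|_{\frak{sl}(s)})\ne0$. First I would pass from $\frak{sl}$ to $\frak{gl}$: choosing integer representatives and absorbing the shift $D$ occurring at each single step of the defining chain into the representatives, an $\frak{sl}$-chain $\mu=\lambda^0>\lambda^1>\dots>\lambda^{n-s}=\lambda$ becomes a chain of genuine (shift-free) interlacings $\mu=p^0,p^1,\dots,p^{n-s}$ of $\mathbb Z$-partitions, characterized step-by-step by $p^{j}_{i}\le p^{j-1}_{i}$ and $p^{j-1}_{i+1}\le p^{j}_{i}$, whose bottom row $p^{n-s}$ equals $\lambda+D$ for some $D\in\mathbb Z$. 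Thus $\mu\succ\lambda$ holds if and only if there are $D\in\mathbb Z$ and such an interlacing chain running from $\mu$ down to $\lambda+D$.

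The heart of the matter is then the following box criterion, which I would isolate and prove on its own: for $\mathbb Z$-partitions $\mu$ of width $n$ and $\kappa$ of width $s$ with $n\ge s$, an interlacing chain from $\mu$ down to $\kappa$ exists if and only if
$$\mu_i\ge\kappa_i\ge\mu_{i+n-s}\qquad\text{for all }1\le i\le s.$$
Necessity is the easy direction: tracking a fixed index $i$ through the successive interlacings gives $\kappa_i\le\mu_i$ and $\kappa_i\ge\mu_{i+n-s}$; applied to the bottom row $\kappa=\lambda+D$ and combined, for a pair $k<l$, as the upper bound at index $k$ minus the lower bound at index $l$, this is exactly implication 1)$\Rightarrow$2). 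Sufficiency I would prove by induction on $n-s$, the case $n=s$ being trivial. For the inductive step it suffices to exhibit one intermediate $\mathbb Z$-partition $\xi$ of width $n-1$ that interlaces inside $\mu$ and still satisfies the box condition relative to $\kappa$; induction then removes the remaining rows. I would take
$$\xi_i:=\max(\mu_{i+1},\kappa_i)\ \ (1\le i\le s),\qquad \xi_i:=\mu_{i+1}\ \ (s<i\le n-1).$$
That $\xi$ is nonincreasing and satisfies $\mu_i\ge\xi_i\ge\mu_{i+1}$ is immediate from the box condition for $(\mu,\kappa)$. The only point requiring attention is the lower box inequality $\kappa_i\ge\xi_{i+n-1-s}$: when the index $i+n-1-s$ exceeds $s$ this reads $\kappa_i\ge\mu_{i+n-s}$, which is given, whereas when it does not exceed $s$ one also needs $\kappa_i\ge\kappa_{i+n-1-s}$, which holds since $\kappa$ is nonincreasing and $i+n-1-s\ge i$. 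This construction is the main obstacle, in the sense that it is the only genuinely nontrivial step; everything else is bookkeeping.

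Finally I would convert the criterion into condition~2). Reintroducing the shift, $\mu\succ\lambda$ holds if and only if there is $D\in\mathbb Z$ with $\mu_i-\lambda_i\ge D\ge\mu_{i+n-s}-\lambda_i$ for all $i$, i.e.\ if and only if $\mu_i-\mu_{j+n-s}\ge\lambda_i-\lambda_j$ for all $1\le i,j\le s$. It remains to see that this full family of inequalities is equivalent to the sub-family 2) indexed by $k<l$. The cases $i=j$ and $i>j$ are automatic: there the right-hand side is $\le0$, while $i\le s<j+n-s$ (here I use $n\ge 2s$, which is implied by $n\ge 4s$) forces $\mu_i\ge\mu_{j+n-s}$, so the left-hand side is $\ge0$. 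Hence only the inequalities with $i<j$, which are precisely~2), carry content, and the equivalence 1)$\Leftrightarrow$2) follows. I would close by remarking that the argument in fact uses only $n\ge 2s$, so the stated hypothesis $\sharp\mu\ge4\sharp\lambda$ is more than sufficient.
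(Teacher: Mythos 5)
Your proof is correct, but it follows a genuinely different route from the paper's. The paper also relies on a box-type sufficiency statement (its Lemma~\ref{Lgts}, stated under the normalization $\lambda_1=\mu_1$, $\lambda_{\sharp\lambda}=\mu_{\sharp\mu}$, with the inductive construction left to the reader), but it never makes the overall shift explicit; instead, to prove 2)$\Rightarrow$1) it argues by contradiction. It truncates $\mu$ from the top and bottom (the partitions $\mu^{r,s}$), uses Lemma~\ref{Lfir} to see that the maximally truncated partition violates condition 2), locates a boundary pair $(r,s)$ at which 2) just fails after one further truncation, and then lowers a single entry so as to force one inequality of 2) into an equality, at which point the equality-case Lemma~\ref{Lgts25} applies and yields $\mu\succ\mu''\succ\lambda$, a contradiction. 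Your argument eliminates the contradiction scheme altogether: you observe that $\mu\succ\lambda$ is equivalent to the existence of an integer $D$ with $\mu_{i+n-s}-\lambda_i\le D\le\mu_i-\lambda_i$ for all $i$, and, all quantities being integers, such a $D$ exists precisely when $\mu_i-\mu_{j+n-s}\ge\lambda_i-\lambda_j$ for all pairs $i,j$; the pairs with $i\ge j$ hold unconditionally once $n\ge 2s-1$, so the criterion collapses to exactly condition 2). The unknown shift is thus handled by a max--min argument rather than by endpoint normalization and equality-forcing. What your approach buys: a direct proof; an explicit intermediate partition $\xi_i=\max(\mu_{i+1},\kappa_i)$ in the induction (precisely the step the paper leaves to the reader in Lemma~\ref{Lgts}); and the weaker hypothesis $\sharp\mu\ge2\sharp\lambda$ (indeed $2\sharp\lambda-1$), whereas the paper's bound $\sharp\mu\ge4\sharp\lambda$ exists only to give its truncation argument room. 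For the application to Proposition~\ref{Pmain} the improved bound is immaterial, since equivalence of p.l.s.\ only concerns large widths, but your argument is shorter and self-contained.
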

In the proof of Proposition~\ref{Lgts2} we need the following three lemmas.
\begin{lemma}\label{Lgts}Let $\lambda$ and $\mu$ be $\mathbb Z$-partitions such that $\sharp\mu\ge\sharp\lambda$, $\lambda_1=\mu_1, \lambda_{\sharp\lambda}=\mu_{\sharp\mu}$. Then \begin{center} $\mu_i\ge \lambda_i\ge \mu_{\sharp\mu-\sharp\lambda+i}$ for $1\le i\le \sharp\lambda$\end{center}
implies $\mu\succ\lambda$.\end{lemma}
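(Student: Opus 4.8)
The plan is to establish $\mu\succ\lambda$ by writing down an explicit descending chain of one-step Gelfand--Tsetlin branchings, each taken with shift $D=0$, so that no appeal to the shift freedom is needed. Set $n:=\sharp\mu$, $m:=\sharp\lambda$ and $d:=n-m\ge 0$. For each $0\le j\le d$ I would define a $\mathbb Z$-partition $\nu^{(j)}$ of width $n-j$ by
$\nu^{(j)}_i:=\max(\mu_{i+j},\lambda_i)$ for $1\le i\le m$, and $\nu^{(j)}_i:=\mu_{i+j}$ for $m<i\le n-j$. The aim is to prove
$\mu=\nu^{(0)}>\nu^{(1)}>\cdots>\nu^{(d)}=\lambda$, which is exactly the defining chain for $\mu\succ\lambda$ (with $\sharp\nu^{(j)}=n-j$ decreasing by one at each step).

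First I would check that the chain has the correct two endpoints, and this is the only place the hypotheses are used. At $j=0$ the formula gives $\nu^{(0)}_i=\max(\mu_i,\lambda_i)=\mu_i$ for $i\le m$ precisely because $\mu_i\ge\lambda_i$, so $\nu^{(0)}=\mu$. At $j=d$ it gives $\nu^{(d)}_i=\max(\mu_{n-m+i},\lambda_i)=\lambda_i$ precisely because $\lambda_i\ge\mu_{n-m+i}=\mu_{\sharp\mu-\sharp\lambda+i}$, so $\nu^{(d)}=\lambda$. Thus the two halves of the sandwiching inequality in the statement correspond exactly to the two ends of the chain.

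Next I would verify that each $\nu^{(j)}$ is nonincreasing and that consecutive terms interlace, i.e.\ $\nu^{(j-1)}>\nu^{(j)}$ with $D=0$; concretely this reduces to $\nu^{(j-1)}_i\ge\nu^{(j)}_i\ge\nu^{(j-1)}_{i+1}$ for $1\le i\le n-j$. Both inequalities follow from monotonicity of $\mu$ and $\lambda$ alone: $\nu^{(j-1)}_i\ge\nu^{(j)}_i$ since $\mu_{i+j-1}\ge\mu_{i+j}$, while $\nu^{(j)}_i\ge\nu^{(j-1)}_{i+1}=\max(\mu_{i+j},\lambda_{i+1})$ since $\lambda_i\ge\lambda_{i+1}$. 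The only elementary fact used is that $\max(a,c)\ge\max(b,c)$ whenever $a\ge b$, and the boundary index $i=m$, where $\lambda$ runs out, is handled by the same estimate (there $\nu^{(j-1)}_{m+1}=\mu_{m+j}\le\nu^{(j)}_m$). Since each step is a bona fide single Gelfand--Tsetlin branching, concatenating them yields $\mu\succ\lambda$.

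The substantive point, and the only genuine obstacle, is guessing the interpolating formula $\nu^{(j)}_i=\max(\mu_{i+j},\lambda_i)$; once it is written down the verifications are purely order-theoretic. Conceptually the lemma asserts the nonemptiness of the Gelfand--Tsetlin polytope with top row $\mu$ and bottom row $\lambda$, and the displayed sandwiching inequality is exactly its nonemptiness condition; the formula above simply exhibits one extremal (``greedy'') lattice point of that polytope. I would also remark that this argument needs neither the shift freedom in the definition of $>$ (we may take $D=0$ throughout) nor the boundary equalities $\lambda_1=\mu_1$, $\lambda_{\sharp\lambda}=\mu_{\sharp\mu}$, which hold in the intended application but are not required here.
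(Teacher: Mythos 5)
Your proof is correct, and it follows the same basic route as the paper: exhibiting a chain of one-step Gelfand--Tsetlin branchings from $\mu$ down to $\lambda$. The difference is one of packaging. The paper argues by induction on $\sharp\mu-\sharp\lambda$, reducing to the existence of a single intermediate partition $\mu'$ that preserves the hypotheses, and declares that step straightforward, leaving the details to the reader; your closed-form interpolation $\nu^{(j)}_i=\max(\mu_{i+j},\lambda_i)$ produces the whole chain at once, and your verification that each $\nu^{(j)}$ is nonincreasing and that $\nu^{(j-1)}>\nu^{(j)}$ holds with $D=0$ is complete and correct, including at the boundary index $i=m$. In effect you have supplied exactly the details the paper omits, in a form that needs no induction. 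Your closing remark is also accurate: the equalities $\lambda_1=\mu_1$ and $\lambda_{\sharp\lambda}=\mu_{\sharp\mu}$ are never used in your argument, so you prove a slightly stronger statement. The paper carries these equalities along because its inductive step preserves them as invariants (so that the inductive hypothesis, as literally stated, applies) and because they hold in the application inside Lemma~\ref{Lgts25}, but they play no logical role in the implication itself.
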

\begin{proof}If $\sharp\lambda=\sharp\mu$ then clearly $\lambda=\mu$. For $\sharp\mu>\sharp\lambda$, arguing by induction, it clearly suffices to show the existence of a $\mathbb Z$-partition $\mu'$ such that

$\bullet$ $\mu>\mu', \sharp\mu'=\sharp\mu-1$,

$\bullet$ $\mu'_1=\mu_1, \mu'_{\sharp\mu'}=\mu_{\sharp\mu}$,

$\bullet$ $\mu'_i\ge\lambda_i\ge\mu'_{\sharp\mu'-\sharp\lambda+i}$ for $1\le i\le \sharp\lambda$,\\
This is straightforward and we leave the details to the reader.\end{proof}
\begin{lemma}\label{Lgts25}Let $\lambda$ and $\mu$ be $\mathbb Z$-partitions such that $\sharp\mu\ge 2\sharp\lambda$. Then the conditions

a) for every $k, l\in\mathbb Z_{\ge1}$ such that~$1\le k<l\le\sharp\lambda$ we have $\mu_k-\mu_{\sharp\mu-\sharp\lambda+l}\ge\lambda_k-\lambda_l$,

b) there are $k, l\in\mathbb Z_{\ge1}$ such that $1\le k<l\le\sharp\lambda$ and $\mu_k-\mu_{\sharp\mu-\sharp\lambda+l}=\lambda_k-\lambda_l$\\
imply $\mu\succ\lambda$.
\end{lemma}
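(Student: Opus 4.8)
The plan is to use the equality in hypothesis~b) to pin down the overall integer shift, reduce to a ``box'' of interlacing inequalities, and then invoke Lemma~\ref{Lgts}. Throughout I write $m:=\sharp\lambda$, $n:=\sharp\mu$, so $n\ge 2m$, and I freely replace $\lambda$ by $\lambda+D\cdot(1,\dots,1)$ for $D\in\mathbb Z$: this changes neither the relation $\mu\succ\lambda$ nor the hypotheses, since both are phrased through differences of entries. Let $(k_0,l_0)$ with $k_0<l_0$ realize the equality in~b). Choosing $D:=\mu_{k_0}-\lambda_{k_0}$ and shifting, I arrange $\lambda_{k_0}=\mu_{k_0}$; the equality $\mu_{k_0}-\mu_{n-m+l_0}=\lambda_{k_0}-\lambda_{l_0}$ then forces $\lambda_{l_0}=\mu_{n-m+l_0}$ as well. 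Thus after normalization $\lambda$ agrees with $\mu$ at the ``upper'' row $k_0$ and at the ``lower'' row $n-m+l_0$.

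Next I would establish the box inequalities $\mu_i\ge\lambda_i\ge\mu_{n-m+i}$ for all $1\le i\le m$. The key intermediate claim is that $\mu_i-\mu_{n-m+j}\ge\lambda_i-\lambda_j$ for \emph{all} $1\le i,j\le m$: for $i<j$ this is hypothesis~a); for $i=j$ it reads $\mu_i\ge\mu_{n-m+i}$, which holds as $\mu$ is nonincreasing and $n\ge m$; for $i>j$ the right-hand side is $\le 0$ while the left-hand side is $\ge 0$, because $i\le m\le n-m+1\le n-m+j$ (this is exactly where $n\ge 2m$ enters), so that $\mu_i\ge\mu_{n-m+j}$. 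This claim says precisely that $\max_{j}(\mu_{n-m+j}-\lambda_j)\le\min_i(\mu_i-\lambda_i)$, so the interval of admissible shifts is nonempty. Now the normalization gives $\mu_{k_0}-\lambda_{k_0}=0=\mu_{n-m+l_0}-\lambda_{l_0}$; since $\mu_{k_0}-\lambda_{k_0}$ is one of the upper quantities and $\mu_{n-m+l_0}-\lambda_{l_0}$ is one of the lower quantities, the interval collapses to $\{0\}$. Hence $\mu_i-\lambda_i\ge 0$ and $\lambda_j-\mu_{n-m+j}\ge 0$ for all indices, which is the desired box.

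It remains to pass from the box to $\mu\succ\lambda$, and here I would reduce to Lemma~\ref{Lgts} by padding. Pick $N$ exceeding all entries of $\mu,\lambda$ in absolute value and set $\mu^+:=(N,\mu_1,\dots,\mu_n,-N)$ and $\lambda^+:=(N,\lambda_1,\dots,\lambda_m,-N)$, of widths $n+2$ and $m+2$. Then $\mu^+_1=\lambda^+_1=N$ and $\mu^+_{n+2}=\lambda^+_{m+2}=-N$, and the box just established gives $\mu^+_i\ge\lambda^+_i\ge\mu^+_{(n+2)-(m+2)+i}$ for all $i$ (the two boundary indices hold by the choice of $N$). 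Lemma~\ref{Lgts} now applies and yields $\mu^+\succ\lambda^+$. Finally I would strip the padding: in a chain $\mu^+=\nu^0>\nu^1>\dots>\nu^{n-m}=\lambda^+$ realizing this dominance, the extremal entries $N$ and $-N$ are frozen at every step, so deleting them from each $\nu^i$ produces a chain realizing $\mu\succ\lambda$.

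I expect the main obstacle to be exactly this last passage $\mu^+\succ\lambda^+\Rightarrow\mu\succ\lambda$: one must verify that padding by sufficiently extreme common rows neither creates nor destroys dominance, i.e. that the top and bottom entries genuinely stay equal to $N$ and $-N$ along any realizing chain, after which their removal is immediate. Should this bookkeeping prove awkward, the alternative is to bypass Lemma~\ref{Lgts} and prove directly that the box implies $\mu\succ\lambda$, by the one-row-at-a-time Gelfand--Tsetlin peeling used in the proof of Lemma~\ref{Lgts}: the width-$(n-1)$ sequence $\mu'$ with $\mu'_i:=\max(\mu_{i+1},\lambda_i)$ for $i\le m$ and $\mu'_i:=\mu_{i+1}$ otherwise satisfies $\mu>\mu'$ and again meets the box with respect to $\lambda$, so induction on $n-m$ concludes. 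Either way the margin $n\ge 2m$ is more than this argument consumes, which I would expect to be why the stronger bound $\sharp\mu\ge 4\sharp\lambda$ of Proposition~\ref{Lgts2} is needed only when the equality case treated here is combined with the strict case.
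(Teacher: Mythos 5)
Your proposal is correct, and it bridges the gap between the hypotheses and Lemma~\ref{Lgts} in a genuinely different way than the paper does. Both proofs pivot on Lemma~\ref{Lgts} and both use the equality pair $(k_0,l_0)$ from~b) to pin the shift $D$; the difference is how the endpoint-equality hypothesis of Lemma~\ref{Lgts} is arranged. The paper applies Lemma~\ref{Lgts} to the \emph{window} $(\mu_{k_0},\dots,\mu_{\sharp\mu-\sharp\lambda+l_0})$ versus $(\lambda_{k_0},\dots,\lambda_{l_0})$, where the endpoints match exactly because of~b), and then spends most of its effort extending the resulting chain to the full partitions via explicit interpolation formulas $\hat\mu^i$ (this is where $\sharp\mu-\sharp\lambda\ge\sharp\lambda$ enters for the paper). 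You instead first upgrade hypothesis~a) to all pairs $(i,j)$, $1\le i,j\le\sharp\lambda$ --- the case $i\ge j$ being free precisely because $\sharp\mu\ge2\sharp\lambda$ --- and then use the equality in~b) to collapse the shift interval and obtain the global box $\mu_i\ge\lambda_i\ge\mu_{\sharp\mu-\sharp\lambda+i}$; the endpoint hypothesis of Lemma~\ref{Lgts} is then forced artificially by padding with $\pm N$ and removed afterwards by stripping. The stripping step you flag as delicate does go through exactly as you sketch: composing the shifts along a realizing chain, the top entries are non-increasing and the bottom entries non-decreasing, which pins the total shift at $0$ and freezes the extremal entries at $N$ and $-N$, after which the stripped interlacing inequalities are a subset of the original ones. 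Your fallback is even cleaner and worth preferring: the one-row peeling with $\mu_i':=\max(\mu_{i+1},\lambda_i)$ shows directly that the box (with a common shift) implies $\mu\succ\lambda$ with no endpoint hypotheses at all, i.e., it reproves Lemma~\ref{Lgts} in a stronger form and makes the padding unnecessary. In short: the paper's route avoids needing the global box but pays with the fiddly gluing formulas; your route isolates the role of $\sharp\mu\ge2\sharp\lambda$ more transparently (it is consumed exactly once, in extending~a) to $i\ge j$) and replaces the gluing by a soft padding/stripping or induction argument.
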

\begin{proof}Condition a) and b) implies via Lemma~\ref{Lgts} the existence of $\mathbb Z$-partitions $\mu^0, \mu^1,..., \mu^{m-n}$ such that

$\bullet$ $\mu^{i}\succ\mu^{i+1}$, $\sharp\mu^i=l-k+1+(\sharp\mu-\sharp\lambda-i)$,

$\bullet$ $\mu^0=(\mu_k\ge...\ge\mu_{\sharp\mu-\sharp\lambda+l})$,

$\bullet$ $\mu^{\sharp\mu-\sharp\lambda}=(\lambda_k\ge ...\ge \lambda_l)$,

$\bullet$ $\lambda_k=\mu^0_1=\mu_1^1=\mu^2_1=...=\mu^{\sharp\mu-\sharp\lambda}_0,$ and $\lambda_l=\mu^0_{\sharp\mu^0}=\mu^1_{\sharp\mu^1}=...=\mu^{\sharp\mu-\sharp\lambda}_{l-k+1}$.\\
We set

$$\hat{\mu^i}_j:=\begin{cases}\mu_j\mbox{~for~}j< k-i,\\
\lambda_j+(\mu_k-\lambda_k)\mbox{~for~}k-i\le j\le k,\\
\mu^i_{j-k}\mbox{~for~}k<j\le (\sharp\mu-i)-\sharp\lambda+l,\\
\lambda_{j-(\sharp\mu-\sharp\lambda)}+(\mu_{\sharp\mu-\sharp\lambda+l}-\lambda_l)\mbox{~for~}(\sharp\mu-i)-\sharp\lambda+l< j\le \sharp\mu-\sharp\lambda+l,\\
\mu_j\mbox{~for~}j>\sharp\mu-\sharp\lambda+l.\end{cases}$$
One can easily check that

$\bullet$ $\hat \mu_{i+1}\succ\hat \mu_i$,

$\bullet$ $\hat \mu^0=\mu$ (here it is crucial that $\sharp\mu-\sharp\lambda\ge \sharp\lambda\ge\max(k, \sharp\lambda-l+1))$,

$\bullet$ $\mu^{\sharp\mu-\sharp\lambda}=\lambda$.\\
Thus $\mu\succ\lambda$.\end{proof}
\begin{lemma}\label{Lfir} Let $\lambda, \mu$ be $\mathbb Z$-partitions, and $i\in\mathbb Z_{>0}$ be a positive integer such that

$$\sharp \lambda \le i\le \sharp\mu-i,\hspace{10pt}\mu_i-\mu_{\sharp\mu-i+1}\ge\lambda_1-\lambda_{\sharp\lambda}.$$
Then $\mu\succ\lambda$.\end{lemma}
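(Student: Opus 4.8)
The plan is to reduce $\mu\succ\lambda$ to a single (shifted) double Gelfand--Tsetlin interlacing between $\mu$ and $\lambda$, and then to realize that interlacing by an explicit chain of one-step branchings.

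First I would use that the relation $\succ$ is unchanged when either partition is shifted by an integer constant, since the shift is built into the one-step criterion defining $>$. Because $\sharp\lambda\le i\le\sharp\mu-i$, we have $\sharp\mu-i+1\ge i+1$, so $\mu_i\ge\mu_{\sharp\mu-i+1}$, and the hypothesis $\mu_i-\mu_{\sharp\mu-i+1}\ge\lambda_1-\lambda_{\sharp\lambda}$ says precisely that the integer interval $[\mu_{\sharp\mu-i+1}-\lambda_{\sharp\lambda},\,\mu_i-\lambda_1]$ is nonempty. Choosing $D$ in this interval and replacing $\lambda$ by $\lambda+D$, I arrange that $\mu_i\ge\lambda_1$ and $\lambda_{\sharp\lambda}\ge\mu_{\sharp\mu-i+1}$, i.e. every part of $\lambda$ lies in the band $[\mu_{\sharp\mu-i+1},\mu_i]$. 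This normalization is the only place where the spread hypothesis is used.

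Next I would verify the full interlacing $\mu_j\ge\lambda_j\ge\mu_{\sharp\mu-\sharp\lambda+j}$ for all $1\le j\le\sharp\lambda$. The upper bound follows from $j\le\sharp\lambda\le i$, which gives $\mu_j\ge\mu_i\ge\lambda_1\ge\lambda_j$; the lower bound follows because $\sharp\lambda\le i$ forces $\sharp\mu-\sharp\lambda+j\ge\sharp\mu-i+1$, whence $\mu_{\sharp\mu-\sharp\lambda+j}\le\mu_{\sharp\mu-i+1}\le\lambda_{\sharp\lambda}\le\lambda_j$. Thus $\lambda$ is squeezed between $\mu$ aligned at the top and $\mu$ aligned at the bottom, which is the combinatorial shadow of the statement that $V_\lambda$ occurs in $V_\mu|_{\mathfrak{sl}(\sharp\lambda)}$.

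Finally I would deduce $\mu\succ\lambda$ from this interlacing, and I expect this step to be the main obstacle. The natural temptation is to quote Lemma~\ref{Lgts}, but its hypotheses require the extreme parts of the two partitions to coincide ($\lambda_1=\mu_1$, $\lambda_{\sharp\lambda}=\mu_{\sharp\mu}$), which generally fails here, and the tightness condition of Lemma~\ref{Lgts25} (an equality among the spread inequalities) need not hold either; bridging this gap is the crux. I would circumvent it by building the chain directly, in the spirit of the (omitted) argument for Lemma~\ref{Lgts}: set $d:=\sharp\mu-\sharp\lambda$ and $\lambda^{(p)}_j:=\max(\lambda_j,\mu_{j+p})$ for $1\le j\le\sharp\mu-p$, reading $\lambda_j$ as absent once $j>\sharp\lambda$. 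Then each $\lambda^{(p)}$ is a $\mathbb Z$-partition, being a pointwise maximum of two nonincreasing sequences; one has $\lambda^{(0)}=\mu$ and $\lambda^{(d)}=\lambda$ by the two interlacing inequalities established above; and $\lambda^{(p)}>\lambda^{(p+1)}$ holds with $D=0$, since $\mu_{j+p}\ge\mu_{j+p+1}$ and $\lambda_j\ge\lambda_{j+1}$ give $\lambda^{(p)}_j\ge\lambda^{(p+1)}_j\ge\lambda^{(p)}_{j+1}$. This exhibits $\mu=\lambda^{(0)}>\cdots>\lambda^{(d)}=\lambda$, so $\mu\succ\lambda$. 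The only remaining work—checking monotonicity of each $\lambda^{(p)}$ and the one-step inequalities at the index where $\lambda$ runs out—is routine, so the conceptual point is simply that the band normalization already forces the interlacing and thereby renders the equal-spread Lemma~\ref{Lgts} unnecessary.
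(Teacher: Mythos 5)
Your proof is correct, but it takes a genuinely different route from the paper's. The paper first truncates $\mu$ to its top $i$ and bottom $i$ parts, noting $\mu\succ\mu':=(\mu_1,\dots,\mu_i,\mu_{\sharp\mu-i+1},\dots,\mu_{\sharp\mu})$, which reduces to the balanced case $\sharp\mu=2i$; it then anchors the shift by $\lambda_1=\mu_i$ (so that $\lambda_{\sharp\lambda}\ge\mu_{i+1}$) and writes an explicit insertion chain that feeds the parts of $\lambda$ in from the bottom at position $i$, one per step, ending with a partition containing $\lambda$ as a consecutive block, which dominates $\lambda$ by stripping extreme parts. You instead skip the truncation, choose any admissible shift $D$ so that all parts of $\lambda$ lie in the band $[\mu_{\sharp\mu-i+1},\mu_i]$, verify the two-sided interlacing $\mu_j\ge\lambda_j\ge\mu_{\sharp\mu-\sharp\lambda+j}$, and then prove that this interlacing \emph{alone} forces $\mu\succ\lambda$ via the sliding-max chain $\lambda^{(p)}_j=\max(\lambda_j,\mu_{j+p})$; all the required one-step inequalities do check out, including at the boundary index $j=\sharp\lambda$. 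Your final step is in effect a strengthening of Lemma~\ref{Lgts} in which the endpoint hypotheses $\lambda_1=\mu_1$, $\lambda_{\sharp\lambda}=\mu_{\sharp\mu}$ are dropped, and — unlike the paper, which leaves the chain construction for Lemma~\ref{Lgts} to the reader — you make that construction completely explicit. What each approach buys: the paper's argument is shorter and tailored to the balanced geometry created by the truncation, while yours is more modular and more general — the max-construction proves ``interlacing implies domination'' once and for all, subsumes Lemma~\ref{Lgts}, and could serve as its missing proof; the price is that you must verify the interlacing inequalities directly from the band normalization, which the paper's anchoring $\lambda_1=\mu_i$ renders unnecessary.
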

\begin{proof} Put $\mu':=(\mu_1,..., \mu_i, \mu_{\sharp\mu-i+1},..., \mu_{\sharp\mu})$. It is clear that $\mu\succ\mu'$, and that $\mu'$ and $\lambda$ satisfy the conditions of Lemma~\ref{Lfir} as well. Therefore without loss of generality we can assume that $\mu=\mu'$, and thus that $i=\sharp\mu-i$. We can also assume that $\lambda_1=\mu_i$. This implies that $\lambda_{\sharp\lambda}\ge\mu_{\sharp\mu-i+1}=\mu_{i+1}.$

Next, we observe that the following sequence of $\mathbb Z$-partitions each element dominates the next:
$$\mu_1\ge...\ge\mu_i\ge\mu_{i+1}\ge...\ge\mu_{\sharp\mu}$$
$$\mu_1\ge...\ge\mu_{i-1}\ge\lambda_{\sharp\lambda}\ge\mu_{i+1}\ge...\ge\mu_{\sharp\mu-1}$$
$$...$$
$$\mu_1\ge...\ge\mu_{i-k}\ge\lambda_{\sharp\lambda-k+1}\ge...\ge\lambda_{\sharp\lambda}\ge\mu_{i+1}\ge...\ge\mu_{\sharp\mu-k}$$
$$...$$
$$\mu_1\ge...\ge\mu_{i-\sharp\lambda}\ge\lambda_1\ge...\ge\lambda_{\sharp\lambda}\ge\mu_{i+1}\ge...\ge\mu_{\sharp\mu-\sharp\lambda}.$$
The last $\mathbb Z$-partition dominates $\lambda$, hence $\mu\succ\lambda$.
\end{proof}
\begin{proof}[Proof of Proposition~\ref{Lgts2}]It is clear that 1) implies 2). We show now that 2) implies 1). To do this, we assume to the contrary that 2) holds and $\mu\not\succ\lambda$. We claim that this contradicts Lemma~\ref{Lgts25}.

Indeed, consider the $\mathbb Z$-partitions $\mu^{r, s}$, for $r, s\le\sharp\lambda$, where
$$\mu_i^{r, s}=\mu_{i+r},\mbox{~for~}1\le i\le \sharp\mu-s-r,~\mu^{0, 0}=\mu.$$
It is clear that $\mu\succ\mu^{r, s}$. Lemma~\ref{Lfir} implies that $\mu_{\sharp\lambda}-\mu_{\sharp\mu-\sharp\lambda+1}<\lambda_1-\lambda_{\sharp\lambda}$, and in particular that $\mu^{\sharp\lambda-1, \sharp\lambda-1}$ does not satisfy condition 2) of Proposition~\ref{Lgts2} considered as an abstract condition on a partition $\mu'$ instead of $\mu$. Therefore, since $\mu=\mu^{0, 0}$ satisfies this condition, there exist $r, s<\sharp\lambda$ so that $\mu^{r, s}$ satisfies this condition, and $\mu^{r+1, s}$ or $\mu^{r, s+1}$ does not satisfy this condition. These two cases are very similar, and we consider only the first one (leaving the second one to the reader).

We put $\mu':=\mu^{r, s}$ and assume that $\mu^{r+1, s}\not\succ\lambda$. Then there exist $k, l$, for $1\le k< l\le\sharp\lambda$, such that

$$\mu'_k-\mu'_{\sharp\mu-\sharp\lambda+l}\ge\lambda_k-\lambda_l,\hspace{10pt}\mu'_{k+1}-\mu'_{\sharp\mu-\sharp\lambda+l}< \lambda_k-\lambda_l.$$
Without loss of generality we can assume that $l$ is chosen so that the value of
$$\mu_{\sharp\mu-\sharp\lambda+l}+\lambda_k-\lambda_l$$
is maximal. This implies that, for
$$\mu'':=\mu'_1, \mu'_2,..., \mu'_{k-1}, \mu'_k+\lambda_l-\lambda_k, \mu'_{k+1}, \mu'_{k+2},..., \mu'_{\sharp\mu-r-s-1},$$
we have $\mu\succ\mu''$, and all conditions of Lemma~\ref{Lgts25} are satisfied for the pair $(\lambda, \mu'')$ (here it is crucial that $\mu+2-(r+s)\ge 2\lambda$). Hence $\mu''\succ\lambda$, and we have the desired contradiction.\end{proof}
\section{Appendix: The inclusion order on primitive ideals.}
As explained in~\cite[Subsection~7.3]{PP1}, a c.l.s. for $\frak{sl}(\infty)$ can be encoded by a pair of nonincreasing sequences
\begin{equation}\label{Eseqlr}p_1\ge p_2\ge p_3\ge...{\rm~and~}q_1\ge q_2\ge q_3\ge...\end{equation}
of elements of $\mathbb Z_{\ge0}\sqcup\{+\infty\}$ with common limit
$$\lim\limits_{i\to\infty}p_i=\lim\limits_{i\to\infty}q_i=m,$$
see~\cite{Zh1}, see also~\cite[Proposition~7.5 and Subsection~7.3]{PP1}. We denote by ${\rm cls}(p_1, p_2,... ; q_1, q_2,...)$ the c.l.s. attached to the pair of sequences~(\ref{Eseqlr}).
The inclusion order on c.l.s. is described by the following theorem due to A.~Zhilinskii.
\begin{theorem}[{\cite[Subsection 2.5]{Zh1},~see~also~\cite[Subsection~7.3]{PP1}}]\label{Tiap}Let $\{p_i, q_i\}_{i\ge 1}$ and $\{p_i', q_i'\}_{i\ge 1}$ be pairs of nonincreasing sequences with respective limits $m, m'$ as in~(\ref{Eseqlr}). Then the following conditions are equivalent:

a) ${\rm cls}(p_1', p_2',...; q_1', q_2',...)\subset{\rm cls}(p_1, p_2,...; q_1, q_2,...)$,

b) there exists $a, b\in\mathbb Z_{\ge0}$ such that $a+b=m-m'$ and
$$p_i'\le p_i-a,\hspace{10pt}q_i'\le q_i-b.$$
\end{theorem}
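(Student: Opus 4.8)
The plan is to reduce everything to the explicit combinatorial description of the weights in a c.l.s. and then to compare the two resulting systems of inequalities. Recall from~\cite[Subsection~7.3]{PP1} that, writing $m=\lim_i p_i=\lim_i q_i$, a $\mathbb Z$-partition $\lambda$ of width $n$ belongs to ${\rm cls}(p_1,p_2,\dots;q_1,q_2,\dots)_n$ precisely when
\[\lambda_i-\lambda_{n-j+1}\le p_i+q_j-m\qquad\text{for all }i,j\ge1,\ i+j\le n\]
(the bounds satisfy $p_i+q_j-m\ge m\ge0$, since both sequences are nonincreasing with limit $m$). I will treat $m,m'$ as finite; the case $m=+\infty$, where ${\rm cls}(p;q)$ is the c.l.s. of all modules, I would dispose of separately. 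Under this description, condition~a) says that the inequality system for $(p',q',m')$ entails the one for $(p,q,m)$ at every width, so Theorem~\ref{Tiap} becomes a comparison of the two systems.

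The implication b)$\Rightarrow$a) I expect to be a single substitution: if $a+b=m-m'$, $p_i'\le p_i-a$ and $q_j'\le q_j-b$, then $p_i'+q_j'-m'\le p_i+q_j-m$ term by term, so every $\lambda$ satisfying the primed system satisfies the unprimed one. The entire content lies in the converse.

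For a)$\Rightarrow$b) the first step is to distill the inclusion into the scalar inequalities
\[(p_i'-p_i)+(q_j'-q_j)\le m'-m\qquad(i,j\ge1),\]
which I will call $(\dagger)$. To produce $(\dagger)$ for a fixed pair $(i_0,j_0)$ I would feed the inclusion an \emph{extremal} weight of the primed c.l.s. that makes exactly the $(i_0,j_0)$-bound tight; the natural candidate, for $n$ large, is the nonincreasing sequence equal to $p_i'$ on $1\le i\le i_0$, to $m'$ on the interior block $i_0<i\le n-j_0$, and to $m'-q_{n-i+1}'$ on the tail $n-j_0<i\le n$. A routine check (using only $p_i',q_j'\ge m'\ge0$) shows that this weight lies in ${\rm cls}(p';q')_n$ and satisfies $\lambda_{i_0}-\lambda_{n-j_0+1}=p_{i_0}'+q_{j_0}'-m'$; feeding it through the inclusion and the $(i_0,j_0)$-bound for $(p,q,m)$ then gives $(\dagger)$. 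Once $(\dagger)$ is available, the splitting is pure arithmetic: letting $i,j\to\infty$ in $(\dagger)$ yields $m'\le m$ and, one variable at a time, $p_i'\le p_i$ and $q_j'\le q_j$; writing $\alpha=\inf_i(p_i-p_i')\ge0$ and $\beta=\inf_j(q_j-q_j')\ge0$, the inequalities $(\dagger)$ become exactly $\alpha+\beta\ge m-m'$, so I can take $a=\min(\alpha,\,m-m')$ and $b=(m-m')-a$, which are nonnegative integers realizing condition~b).

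I expect the main obstacle to be the extremal construction in the converse: one must exhibit, for each index pair, a genuine member of the c.l.s. that saturates a single prescribed inequality while respecting all the others, and it is precisely the interior plateau at height $m'$ that both accounts for the $-m'$ appearing in the bound and ties the two limits together through the single equation $a+b=m-m'$. A secondary point to verify is that one global pair $(a,b)$ can serve all indices at once; this works because $(\dagger)$ decouples into the two suprema $\alpha,\beta$ whose sum is the only quantity constrained by $m-m'$.
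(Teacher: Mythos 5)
The first thing to note is that the paper contains no proof of Theorem~\ref{Tiap}: it is imported as a known result of Zhilinskii, cited from \cite[Subsection~2.5]{Zh1} and \cite[Subsection~7.3]{PP1}, so your argument cannot be measured against an internal proof and must be judged on its own. On its own terms, its core is sound and gives a genuinely self-contained combinatorial route. Granting your membership criterion, b)$\Rightarrow$a) is indeed a one-line substitution; in the converse, your plateau weight does lie in ${\rm cls}(p';q')_n$, does saturate the $(i_0,j_0)$-inequality, and hence yields $(\dagger)$; and the splitting of $(\dagger)$ into a single pair $(a,b)$ via $\alpha=\inf_i(p_i-p_i')$, $\beta=\inf_j(q_j-q_j')$ is correct (in fact $\alpha\le m-m'$ automatically, since $p_i-p_i'=m-m'$ for $i$ large, so your $a=\min(\alpha,m-m')$ is just $\alpha$). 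Be aware, however, that the criterion you ``recall'' --- $\lambda\in{\rm cls}(p;q)_n$ iff $\lambda_i-\lambda_{n-j+1}\le p_i+q_j-m$ for $i+j\le n$ --- is where all the representation theory lives: it amounts to saying that ${\rm cls}(p;q)_n$ consists of the weights dominated, in the sense of the relation $\succ$ of this paper, by the constituents of the defining tensor products. It is true, but if it is not stated in this form in \cite{PP1} you must prove it, e.g. via the branching combinatorics in the spirit of Proposition~\ref{Lgts2}; as written, your proof outsources its main content to this ``recollection''.

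There are also two concrete gaps, both repairable. First, the extremal weight is undefined whenever some $p'_i$ or $q'_j$ with $i\le i_0$, $j\le j_0$ equals $+\infty$, and this happens even when $m'$ is finite (e.g. $p'=(+\infty,0,0,\dots)$, the c.l.s. of $\Sa^\cdot(V)$); you need to truncate infinite entries at a large parameter $K$ and let $K\to\infty$, which simultaneously yields the implication $p'_i=+\infty\Rightarrow p_i=+\infty$ required for condition b). Second, the case $m=+\infty$ cannot simply be ``disposed of separately'': if $m=+\infty$ and $m'<\infty$, then a) holds (the unprimed c.l.s. contains every module) while no $a,b\in\mathbb Z_{\ge0}$ satisfy $a+b=m-m'$, so the equivalence as literally stated needs a convention on infinite values of $a,b$; you should make that convention explicit, since your finite-entry argument does not extend to it and the issue is one of interpretation of the statement rather than a routine limiting case.
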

Fix $x, y, s, t, l_1, ..., l_s, r_1,..., r_t$, and consider $I:=I(x, y, l_1,..., l_s, r_1,..., r_t)$ as in Section~\ref{Smr}. Put
\begin{equation}p^c_i:=\begin{cases}+\infty&{\rm~if~} 1\le i\le c\\y+l_{i-c}&{\rm~if~} c+1\le i\le c+s\\y&{\rm~if~} i>c+s+1\\\end{cases},\hspace{10pt}q^d_i:=\begin{cases}+\infty&{\rm~if~} 1\le i\le d\\y+r_{i-d}&{\rm~if~} d+1\le i\le d+t\\y&{\rm~if~} i>d+t+1\\\end{cases}\label{Eseqpq}\end{equation}
for any $c, d\in\mathbb Z_{\ge0}$.
\begin{proposition}\label{Pordz}We have

a) $I=I(x, y, l_1,..., l_s, r_1,..., r_t)=I({\rm cls}(p_1^x, p_2^x,...; q_1^0, q_2^0,...)),$

b) $I=I(x, y, l_1,..., l_s, r_1,..., r_t)=I({\rm cls}(p_1^c, p_2^c,...; q_1^d, q_2^d,...))$ for all $c, d$ such that $c+d=x$,

c) $Q(I)=\cup_{c+d=x} {\rm cls}(p_1^c, p_2^c,...; q_1^d, q_2^d,...)$.
\end{proposition}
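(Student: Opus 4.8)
The plan is to recast (a)--(c) as identities of coherent local systems and to deduce them from the additive structure on c.l.s.\ together with Theorem~\ref{Tiap}. For an integrable $\frak{sl}(\infty)$-module $N$, let $Q_N$ denote the collection of $\frak{sl}(n)$-constituents of $N$; since $N$ is a direct limit of finite-dimensional semisimple modules, $Q_N$ is a c.l.s., and from $\cap_{[L]\in(Q_N)_n}\Ann_{\Ue_n}L=\Ann_{\Ue_n}(N|_{\frak{sl}(n)})$ we get $I(Q_N)=\Ann N$. I will also use, from Subsection~\ref{SSprt2}, that $I(Q(J))=J$ for every locally integrable ideal $J$, and one checks from the definitions that $Q\subseteq Q(I(Q))$ for every c.l.s.\ $Q$, so that $Q(J)$ is the largest c.l.s.\ with associated ideal $J$; note that $Q_N\subseteq Q(\Ann N)$, with strict inclusion in general. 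The third ingredient is the additive structure of~\cite{Zh1} (see also~\cite{PP1}): the c.l.s.\ of a tensor product is obtained from those of the factors by adding the limits $m$ and taking the sorted union of the top-excursion sequences $(p_i-m)$ and, separately, of the bottom-excursion sequences $(q_i-m)$.

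For part (a) I would compute $Q_N$ for each tensor factor of $M:=V_{Y_l}\otimes(\Sa^\cdot(V))^{\otimes x}\otimes(\Lambda^\cdot(V))^{\otimes y}\otimes(V_{Y_r})_*$ by branching to $\frak{sl}(n)$ via the Gelfand--Tsetlin rule. The factor $V_{Y_l}$ gives ${\rm cls}(l_1,\dots,l_s,0,\dots;0,\dots)$ and $(V_{Y_r})_*$ gives ${\rm cls}(0,\dots;r_1,\dots,r_t,0,\dots)$, both with limit $0$; each copy of $\Sa^\cdot(V)$ gives ${\rm cls}(+\infty,0,\dots;0,\dots)$, since the constituents $\Sa^j(V(n))$ have unbounded first coordinate and no other excursion; and each copy of $\Lambda^\cdot(V)$ gives ${\rm cls}(1,1,\dots;1,1,\dots)$ with limit $1$ and empty excursions, the constituents $\Lambda^k(V(n))$ merely raising the limit. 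Summing these according to the additive rule --- the $x$ infinities stack into the first $x$ positions of the $p$-sequence, the limit rises to $y$, and the entries $l_i$, $r_j$ are shifted by $y$ --- reproduces exactly the sequences $(p^x_i;q^0_i)$ of~(\ref{Eseqpq}). Hence $Q_M={\rm cls}(p^x;q^0)$, and $I=\Ann M=I(Q_M)=I({\rm cls}(p^x;q^0))$, which is (a).

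The same branching computation identifies ${\rm cls}(p^c;q^d)$ as $Q_{M_{c,d}}$, where $M_{c,d}:=V_{Y_l}\otimes(\Sa^\cdot(V))^{\otimes c}\otimes(\Sa^\cdot(V_*))^{\otimes d}\otimes(\Lambda^\cdot(V))^{\otimes y}\otimes(V_{Y_r})_*$ is obtained by transferring $d$ of the symmetric factors to the conatural side, each copy of $\Sa^\cdot(V_*)$ contributing a bottom infinity. Thus $I({\rm cls}(p^c;q^d))=\Ann M_{c,d}$, and since $I({\rm cls}(p^c;q^d))=I\big(Q(I({\rm cls}(p^c;q^d)))\big)$, both (b) and (c) reduce to the single saturation identity
$$Q\big(I({\rm cls}(p^c;q^d))\big)=\bigcup_{c'+d'=x}{\rm cls}(p^{c'};q^{d'})\qquad\text{for every }c+d=x,$$
whose right-hand side is independent of $(c,d)$ and is again a c.l.s.\ (a union of c.l.s.\ is coherent). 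Indeed, granting this, (c) is the case $(c,d)=(x,0)$ combined with (a), and (b) follows because $\Ann M_{c,d}=I({\rm cls}(p^c;q^d))$ then equals the ideal of the common right-hand side, which by (a) is $I$.

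The saturation identity is the crux. Its inclusion $\supseteq$ amounts to the \emph{exchange} statement that $\Ann M_{c,d}$ is independent of the splitting, equivalently (one factor at a time) that $\Ann(N\otimes\Sa^\cdot(V))=\Ann(N\otimes\Sa^\cdot(V_*))$ for integrable $N$; this is the main obstacle, for the two c.l.s.\ $Q_N$ enlarged by a top, resp.\ a bottom, infinity are \emph{incomparable} for the order of Theorem~\ref{Tiap} (one has the strictly smaller $p$-sequence, the other the strictly smaller $q$-sequence), so the equality of ideals has no counterpart at finite rank and genuinely expresses the non-injectivity of $Q\mapsto I(Q)$. I would attack it through the explicit classification of primitive ideals of $\Ue_n$ used in the proof of Lemma~\ref{Lrad}, showing that the intersection over all symmetric powers of their annihilators already lies in the annihilator of every dual symmetric power once $n$ is large. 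The inclusion $\subseteq$, that the saturation does not overshoot, I would obtain by decomposing the c.l.s.\ $Q(I({\rm cls}(p^c;q^d)))$ into finitely many c.l.s.\ of the form ${\rm cls}(p;q)$ in the artinian lattice of~\cite{Zh1}: the associated-variety bound of Lemma~\ref{Lrad} forces each such piece to carry exactly $x$ infinities, while Theorem~\ref{Tiap} and the dominance criterion of Proposition~\ref{Lgts2} pin the limit to $y$ and the finite tails to $y+l_i$ and $y+r_j$, leaving only the pieces ${\rm cls}(p^{c'};q^{d'})$ with $c'+d'=x$. I expect the exchange statement to be the single hardest point of the argument.
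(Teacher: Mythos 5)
Your framework and reductions are sound, and in fact they reconstruct the content of what the paper invokes: your branching computation of the constituent c.l.s.\ of $V_{Y_l}\otimes(\Sa^\cdot(V))^{\otimes x}\otimes(\Lambda^\cdot(V))^{\otimes y}\otimes(V_{Y_r})_*$ together with the identity $I(Q_N)=\Ann N$ is exactly what stands behind part a) (the paper disposes of it by ``follows from the definition, see also \cite[Theorem~7.9]{PP1}''), and your reduction of b) and c) to the single saturation identity $Q\bigl(I({\rm cls}(p^c;q^d))\bigr)=\cup_{c'+d'=x}{\rm cls}(p^{c'};q^{d'})$ is logically correct, using $I(Q(J))=J$ for locally integrable $J$ and the coherence of finite unions of c.l.s.

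The genuine gap is that you never prove that identity: both of its halves are left as declared strategies. The inclusion $\supseteq$ is your ``exchange statement'' $\Ann(N\otimes\Sa^\cdot(V))=\Ann(N\otimes\Sa^\cdot(V_*))$, and your proposed attack --- ``showing that the intersection over all symmetric powers of their annihilators already lies in the annihilator of every dual symmetric power once $n$ is large'' --- is not an argument but a restatement of the claim; as you yourself observe, the statement has no finite-rank counterpart (the two c.l.s.\ are incomparable under Theorem~\ref{Tiap}), so some limit mechanism must be exhibited, and none is. Likewise, for $\subseteq$ the assertion that the associated-variety bound of Lemma~\ref{Lrad} forces each irreducible constituent of $Q(I({\rm cls}(p^c;q^d)))$ to carry ``exactly $x$ infinities,'' and that Theorem~\ref{Tiap} plus Proposition~\ref{Lgts2} pin down the limit $y$ and the tails $y+l_i$, $y+r_j$, is only sketched. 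These two unproven facts are precisely what the paper's proof consists of: part b) is the formula $I(v,w,Q_f)=I(v+w,0,Q_f)$ of \cite[Subsection~7.4]{PP1}, and part c) is \cite[Lemma~7.6c)]{PP1}. So while your proposal correctly locates the crux (and honestly flags it as the hardest point), as a proof of Proposition~\ref{Pordz} it is incomplete exactly where the real content lies; to close it one must either import those results of \cite{PP1}, as the paper does, or supply the missing arguments.
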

\begin{proof}Part a) follows from the definition of $I(x, y, l_1,..., l_s, r_1,..., r_t)$, see also~\cite[Theorem~7.9]{PP1}. Part b) follows from the discussion in~\cite[Subsection~7.4]{PP1}, see formula
$$I(v, w, Q_f)=I(v+w, 0, Q_f)$$
in the notation of~\cite{PP1}.

Part c) is implied by~\cite[Lemma 7.6c)]{PP1}.\end{proof}
Alltogether, this allows us to provide an explicit inclusion criterion for a pair of primitive ideals.
\begin{theorem}\label{Tord} The following conditions are equivalent:

a) there is a (not necessarily strict) inclusion
$$I(x, y, s, t, l_1,..., l_s, r_1,..., r_t)\subset I(x', y', s', t',  l_1',..., l'_{s'}, r'_1,..., r'_{t'}),$$

b) $x\ge x', y\ge y'$ and, for some $a, b, c, d\in\mathbb Z_{\ge0}$ with $c+d=x-x', a+b=y-y'$, the inequalities
$$ l_{i}-a\ge l'_{i+c},\hspace{10pt}r_{j}-b\ge r'_{j+d},$$
where
$$l_i=0{\rm~if~}i>s,\hspace{10pt}l'_i=0{\rm~if~}i>s',\hspace{10pt}r_j=0{\rm~if~}j>t,\hspace{10pt}r'_j=0{\rm~if~}j>t'$$
are satisfied for all $i, j\ge 0$.
\end{theorem}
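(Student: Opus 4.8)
The plan is to deduce Theorem~\ref{Tord} from Proposition~\ref{Pordz} together with Zhilinskii's inclusion criterion, Theorem~\ref{Tiap}. First I would reduce the inclusion of primitive ideals to an inclusion of the associated coherent local systems. By Proposition~\ref{Pordz}c), each $Q(I)$ is a finite union $\bigcup_{c+d=x}{\rm cls}(p^c_\bullet;q^d_\bullet)$, and a containment $I\subset I'$ of locally integrable ideals is equivalent to the reverse containment $Q(I')\subset Q(I)$ of p.l.s.. Thus condition a) becomes: for every decomposition $x'=c'+d'$ there is a decomposition $x=c+d$ with ${\rm cls}(p'^{c'}_\bullet;q'^{d'}_\bullet)\subset{\rm cls}(p^c_\bullet;q^d_\bullet)$. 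I would note that by Proposition~\ref{Pordz}b) we may as well fix the most convenient representatives, e.g. put all the shift into the $p$-sequence on the larger side, and match it against a chosen splitting on the smaller side.

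Next I would unwind Theorem~\ref{Tiap} for the specific sequences~(\ref{Eseqpq}). The limits of $p^c_\bullet$ and $q^d_\bullet$ are both $y$, and the limits of the primed sequences are both $y'$, so in Theorem~\ref{Tiap}b) the total shift is $m-m'=2y-2y'$, split as $a_0+b_0$ across the two sequences. Because the tails of $p^c_\bullet$ equal $y$ and of $q^d_\bullet$ equal $y'$, comparing tails forces $a_0\le y-y'$ and $b_0\le y-y'$ with $a_0+b_0=2(y-y')$, hence $a_0=b_0=y-y'$; this is where the constraint $y\ge y'$ emerges and where I would set $a:=a_0-(y-y')$ and $b:=b_0-(y-y')$ to recover the $a,b$ of condition b). Feeding the explicit entries $p^c_i=y+l_{i-c}$ (for $i$ in the relevant range, with $l_{i-c}=0$ past the diagram) into the inequality $p'^{c'}_i\le p^c_i-a_0$ then produces, after re-indexing by the shifts $c,c'$ and cancelling the common $y,y'$, exactly the column inequalities $l_i-a\ge l'_{i+c}$; symmetrically the $q$-sequences give $r_j-b\ge r'_{j+d}$. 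The bookkeeping that the horizontal shifts $c,c'$ (resp. $d,d'$) combine into a single shift $c:=c'-c$ (resp. $d$) with $c+d=x-x'$ is the reindexing that matches Proposition~\ref{Pordz}b) against Theorem~\ref{Tiap}.

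The forward direction b)$\Rightarrow$a) is the routine one: given $a,b,c,d$ as in b), I would read the construction backwards, choosing the splittings of $x$ and $x'$ so that Theorem~\ref{Tiap}b) is satisfied for every pair in the union, yielding $Q(I')\subset Q(I)$ and hence $I\subset I'$. The main obstacle, and the only place demanding genuine care, is the \emph{quantifier structure} in a)$\Rightarrow$b): the union over splittings means $I\subset I'$ only asserts that \emph{each} cls on the $I'$-side is contained in \emph{some} cls on the $I$-side, so I must verify that the \emph{same} pair $(a,b,c,d)$ works uniformly rather than varying with the splitting. Here I would exploit that the sequences differ across splittings only by the position of the initial block of $+\infty$'s, so the tail comparison that pins down $a=y-y'-\ldots$ is splitting-independent, and the finitely many diagram inequalities can be aligned by a single choice of $(c,d)$ with $c+d=x-x'$. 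Once this uniformity is established the equivalence follows, and I would close by remarking that the degenerate conventions $l_i=0$ for $i>s$ etc. are precisely what make the tail entries $y$ consistent with the diagram entries, so no separate edge cases arise.
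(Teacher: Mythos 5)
Your overall skeleton agrees with the paper's proof: both directions are run through Proposition~\ref{Pordz} and Zhilinskii's criterion (Theorem~\ref{Tiap}). But in the direction a)$\Rightarrow$b) you have a genuine gap precisely at the step you treat as automatic. From $Q(I')\subset Q(I)$ and Proposition~\ref{Pordz}c) one gets
$$\cup_{c'+d'=x'}{\rm cls}((p')_1^{c'},...;(q')_1^{d'},...)\ \subset\ \cup_{c+d=x}{\rm cls}(p_1^c,...;q_1^d,...),$$
and this is a levelwise containment of sets of isomorphism classes of modules; it does \emph{not} formally ``assert that each cls on the $I'$-side is contained in some cls on the $I$-side.'' A single c.l.s. could a priori be spread over several members of the union on the right. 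The paper closes this gap by invoking the fact that ${\rm cls}((p')_1^{x'},...;(q')_1^0,...)$ is \emph{irreducible} in Zhilinskii's sense, so that containment in a finite union of c.l.s. forces containment in a single member (\cite[Proposition~I.I.2]{Zh1}); this ingredient is entirely absent from your argument. Relatedly, the ``uniformity over splittings'' you single out as the main obstacle is a non-issue: by Proposition~\ref{Pordz}b) all splittings of $x'$ define the same ideal $I'$, so it suffices to work with the \emph{one} c.l.s. ${\rm cls}((p')_1^{x'},...;(q')_1^0,...)$, which is exactly what the paper does; no tuple $(a,b,c,d)$ needs to work uniformly across splittings. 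The same remark simplifies your b)$\Rightarrow$a) direction: one containment of c.l.s., combined with Proposition~\ref{Pordz}a)--b), already gives $I\subset I'$.

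Second, your unwinding of Theorem~\ref{Tiap} is arithmetically wrong. In~(\ref{Eseqlr}) each pair of sequences has a single \emph{common} limit, and by~(\ref{Eseqpq}) both $p^c_\bullet$ and $q^d_\bullet$ tail to $y$ (not $q^d_\bullet$ to $y'$; you have mixed up the primed and unprimed sequences), while both primed sequences tail to $y'$. Hence $m=y$, $m'=y'$, and the shift in Theorem~\ref{Tiap}b) satisfies $a_0+b_0=m-m'=y-y'$, not $2(y-y')$. The constraint $y\ge y'$ comes from $a_0+b_0=y-y'\ge0$, the tail comparison only gives the automatic bounds $a_0\le y-y'$, $b_0\le y-y'$, and the numbers $a_0,b_0$ are taken in the paper's proof as the $a,b$ of condition b) with no renormalization whatsoever. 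Your correction $a:=a_0-(y-y')$, $b:=b_0-(y-y')$ forces $a=b=0$, so your argument could only ever produce the special case $a=b=0$ of condition b) and cannot establish the stated equivalence. With the correct limits, one also needs the observation (which the paper extracts from Theorem~\ref{Tiap} by comparing the $+\infty$ blocks) that $c\ge x'$, so that $c-x'$ and $d$ give the horizontal shifts with $(c-x')+d=x-x'$; this re-indexing is present in your sketch but rests on the faulty limit bookkeeping, so as written the derivation of the inequalities of b) does not go through.
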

\begin{proof} Let $\{p_i^c, q_i^d\}_{i\ge1}$, $\{(p')_i^c, (q')_i^c\}$ be defined by~(\ref{Eseqpq}).

We would like to show first that b) implies a). It is straightforward to check that
$${\rm cls}((p')_1^{x'}, (p')_2^{x'},...; (q')_1^{0}, (q')_2^{0},...)\subset {\rm cls}(p_1^{x'+c}, p_2^{x'+c},...; q_1^{d}, q_2^{d},...)$$
by Theorem~\ref{Tiap}. This together with Proposition~\ref{Pordz} a-b) implies a).

Now we show that a) implies b). It is clear that
$$Q(I(x', y', s', t',  l_1',..., l'_{s'}, r'_1,..., r'_{t'}))\subset Q(I(x, y, s, t, l_1,..., l_s, r_1,..., r_t)).$$
According to Proposition~\ref{Pordz} c), this implies
$$\cup_{c+d=x'} {\rm cls}((p')_1^c, (p')_2^c,...; (q')_1^d, (q')_2^d,...)\subset \cup_{c+d=x} {\rm cls}(p_1^c, p_2^c,...; q_1^d, q_2^d,...).$$
In particular, we have
$${\rm cls}((p')_1^{x'}, (p')_2^{x'},...; (q')_1^0, (q')_2^0,...)\subset \cup_{c+d=x} {\rm cls}(p_1^c, p_2^c,...; q_1^d, q_2^d,...).$$
The c.l.s. ${\rm cls}((p')_1^{x'}, (p')_2^{x'},...; (q')_1^0, (q')_2^0,...)$ is irreducible~\cite[Definition~I.I.I]{Zh1}, and therefore
$${\rm cls}((p')_1^{x'}, (p')_2^{x'},...; (q')_1^0, (q')_2^0,...)\subset {\rm cls}(p_1^c, p_2^c,...; q_1^d, q_2^d,...)$$
for some $c, d$ such that $c+d=x$, see~\cite[Proposition~I.I.2]{Zh1}. Next, by Theorem~\ref{Tiap} we have $c\ge x'$, and moreover there exist $a, b\in\mathbb Z_{\ge0}$ such that $a+b=y-y'$ and the condition b) of Theorem~\ref{Tiap} holds. Thus b) holds for $a, b~(a+b=y-y')$ and $c-(x'), d~((c-x')+d=x-x')$.

\end{proof}
\begin{corollary}The lattice of two-sided ideals of $\Ue(\frak{sl}(\infty))$ satisfies the ascending chain condition.\end{corollary}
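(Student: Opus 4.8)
The plan is to embed the lattice of two-sided ideals of $\Ue$, order-reversingly, into the lattice of coherent local systems, and then to invoke Zhilinskii's theorem that the latter is artinian, i.e. satisfies the descending chain condition~\cite{Zh1}. To each two-sided ideal $I\subset\Ue$ I attach the collection $Q(I)=\{Q(I)_n\}_{n\ge2}$ with $Q(I)_n=\{[M]\in\Irr_n\mid I_n\subset\Ann_{\Ue_n}M\}$. Exactly as in Subsection~\ref{SSprt2}, the inclusion $Q(I)_{n'}|_{\frak{sl}(n)}\subset Q(I)_n$ uses only that $I$ is an ideal, so $Q(I)$ is a p.l.s. for \emph{every} ideal $I$, and the assignment $I\mapsto Q(I)$ is visibly order-reversing: if $I\subset I'$ then $I_n\subset I'_n$, whence $Q(I')_n\subset Q(I)_n$ for all $n$.

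The engine of the argument is that this assignment is injective once restricted appropriately. For a locally integrable ideal $I$ one has $I(Q(I))=I$, and, by Proposition~\ref{Pdef} together with the remark following it, $Q(I)$ is then a genuine c.l.s.. Granting for the moment that every two-sided ideal of $\Ue$ is locally integrable, I would conclude as follows: given an ascending chain $I_1\subset I_2\subset\dots$, the associated chain $Q(I_1)\supset Q(I_2)\supset\dots$ is a descending chain in the lattice of c.l.s.. Since that lattice is artinian~\cite{Zh1}, it stabilizes, say $Q(I_k)=Q(I_N)$ for $k\ge N$; applying $I(\cdot)$ and using $I(Q(I_k))=I_k$ yields $I_k=I_N$ for $k\ge N$. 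This is precisely the ascending chain condition. In short, an injective order-reversing map into an artinian poset transfers the descending chain condition of the target into the ascending chain condition on the source.

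The hard part, and the genuine obstacle, is therefore to show that every two-sided ideal $I\subset\Ue$ is locally integrable, equivalently that $I=\sqrt I$, i.e. that $\Ue/I$ is semiprimitive. One half is already in place: arguing as in the proof of Proposition~\ref{Pjtolint} one checks that $Q(I)=Q(\sqrt I)$ and that $\sqrt I$ is locally integrable, so that $\sqrt I=I(Q(\sqrt I))=I(Q(I))$ is recovered from $Q(I)$; this shows that the map $I\mapsto Q(I)$ sees only the radical, and hence that the chain of radicals $\sqrt{I_k}$ always stabilizes. What is missing is the reverse inclusion $\sqrt I\subset I$, equivalently the statement that an infinite strictly ascending chain of ideals cannot live inside a single radical class. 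Here the special structure of $\Ue(\frak{sl}(\infty))$ must be used, since no Noetherian hypothesis is available and one cannot proceed quotient-by-quotient as in the finite-dimensional case. I would attack this by combining Lemma~\ref{Lalg}, which detects $\sqrt I$ through nilpotence of every $az$ modulo $I$, with the fact that $\frak{sl}(\infty)$ is a \emph{perfect} Lie algebra exhausted by the $\frak{sl}(n)$ (which already forces the augmentation ideal to be idempotent), in order to prove that the Jacobson radical of each $\Ue/I$ vanishes. I expect this semiprimitivity to be the crux; failing a direct idempotence argument on the ideals $I(Q)$, it can be imported from the classification of all two-sided ideals of $\Ue$ in~\cite{PP1}.
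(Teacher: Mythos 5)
Your reduction is the one the paper's own machinery supports, and it is carried out correctly as far as it goes: $I\mapsto Q(I)$ is order-reversing on all two-sided ideals; for a locally integrable ideal $I$ one has $I(Q(I))=I$, and by Proposition~\ref{Pdef} together with the remark following it, $Q(I)$ is then a genuine c.l.s.; Zhilinskii's theorem that the lattice of c.l.s. is artinian~\cite{Zh1} therefore gives the ascending chain condition on the set of locally integrable ideals. Your further observation that $Q(I)=Q(\sqrt I)$ and $\sqrt I=I(Q(I))$ for an arbitrary ideal $I$ (via Lemma~\ref{Lalg} and Proposition~\ref{Pjtolint}) is also correct, and shows that the radicals $\sqrt{I_k}$ of any ascending chain stabilize. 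Since the paper states this corollary with no proof at all, what you have written is essentially the natural reconstruction of the intended argument, and you have correctly isolated the one point it does not cover.

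That point, however, is a genuine gap, and neither of your proposed repairs closes it. You need that every two-sided ideal of $\Ue$ is locally integrable, equivalently that $I=\sqrt I$ for every ideal $I$. First, perfectness of $\frak{sl}(\infty)$ cannot by itself be the mechanism: $\frak{sl}(n)$ is also perfect and its augmentation ideal is likewise idempotent, yet $\Ue(\frak{sl}(n))$ has many ideals strictly contained in their radicals --- for instance in $\Ue(\frak{sl}(2))$ the ideal $I_0\cdot\bigl(\Ue(\frak{sl}(2))C\bigr)$, with $I_0$ the augmentation ideal and $C$ the Casimir element, is strictly smaller than its radical $\Ue(\frak{sl}(2))C$. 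So any argument must use the passage to infinite rank in an essential way, and you offer no mechanism for that. Second, \cite{PP1} contains no classification of all two-sided ideals of $\Ue(\frak{sl}(\infty))$; its classification-type results concern integrable ideals (via c.l.s.) only, so there is nothing to import. Indeed, if it were known that every two-sided ideal is integrable, then Theorem~\ref{T1b} --- the main new result of the present paper --- would be immediate, so no statement of that strength can be found in the paper's references. What your argument actually proves is ACC on the lattice of locally integrable (equivalently integrable, equivalently radical) ideals, plus stabilization of radicals along arbitrary ascending chains; what is missing is the exclusion of an infinite strictly ascending chain of ideals all having the same radical, and this cannot be reached by the paper's techniques, which are insensitive to the difference between $I$ and $\sqrt I$.
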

\begin{corollary}The augmentation ideal $I(0, 0, 0, 0)$ is the only maximal ideal of $\Ue(\frak{sl}(\infty))$.\end{corollary}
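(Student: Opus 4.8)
The plan is to assemble the statement from the classification of primitive ideals together with the inclusion order, the one genuinely new input being a ring-theoretic observation that maximal ideals are primitive. I would first reduce the corollary to three facts: (i) $I(0,0,0,0)$ is a proper maximal ideal; (ii) every maximal two-sided ideal of $\Ue$ is primitive and nonzero; (iii) every ideal $I(x,y,s,t,l_1,\dots,l_s,r_1,\dots,r_t)$ is contained in $I(0,0,0,0)$. Granting these, let $I$ be any maximal ideal. By (ii) it is primitive and nonzero, so by Theorem~\ref{T1} it equals some $I(x,y,s,t,l_1,\dots,r_t)$; by (iii) it satisfies $I\subseteq I(0,0,0,0)$; and since $I(0,0,0,0)$ is a proper ideal while $I$ is maximal, this forces $I=I(0,0,0,0)$. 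As $I(0,0,0,0)$ is itself maximal by (i), it is the unique maximal ideal.

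Fact (i) is immediate: $I(0,0,0,0)$ is the annihilator of the trivial module, i.e. the augmentation ideal, the kernel of the counit $\Ue\to\mathbb F$. This map is surjective, so $\Ue/I(0,0,0,0)\cong\mathbb F$ is a field; hence $I(0,0,0,0)$ is maximal and proper, and in particular $0\subsetneq I(0,0,0,0)\subsetneq\Ue$, which also shows that the zero ideal is not maximal.

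For fact (ii), let $I$ be maximal. Then $B:=\Ue/I$ is a nonzero unital ring whose only two-sided ideals are $0$ and $B$, that is, a simple ring. By Zorn's lemma $B$ has a maximal left ideal $\frak m$, so $B/\frak m$ is a simple left $B$-module; its annihilator is a proper two-sided ideal of $B$ and hence equals $0$, so $B/\frak m$ is faithful over $B$. Regarded as a $\Ue$-module, $B/\frak m$ is then simple with annihilator exactly $I$, so $I$ is primitive. It is nonzero because, as noted in (i), the zero ideal is not maximal.

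Fact (iii) is where the combinatorics of the previous section enters, and it is the main point. By Proposition~\ref{Pordz}c) the coherent local system attached to $I(0,0,0,0)$ is ${\rm cls}(0,0,\dots;0,0,\dots)$, the c.l.s. of the trivial representation, while that of $I(x,y,s,t,\dots)$ is $\cup_{c+d=x}{\rm cls}(p_1^c,p_2^c,\dots;q_1^d,q_2^d,\dots)$. Theorem~\ref{Tiap} shows that ${\rm cls}(0,0,\dots;0,0,\dots)$ is the minimal c.l.s.: for any c.l.s. ${\rm cls}(p_1,p_2,\dots;q_1,q_2,\dots)$ with limit $m$, choosing $a=m$ and $b=0$ in condition b) of Theorem~\ref{Tiap} verifies ${\rm cls}(0,0,\dots;0,0,\dots)\subseteq{\rm cls}(p_1,p_2,\dots;q_1,q_2,\dots)$. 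Hence $Q(I(0,0,0,0))\subseteq Q(I(x,y,s,t,\dots))$, and since the assignment $Q\mapsto I(Q)$ reverses inclusions and satisfies $I(Q(I))=I$ on locally integrable (in particular integrable) ideals, we obtain $I(x,y,s,t,\dots)\subseteq I(0,0,0,0)$; this is also the special case $x'=y'=s'=t'=0$ of the inclusion criterion Theorem~\ref{Tord}. I expect the minimality of the trivial c.l.s.\ in the inclusion order—equivalently, the assertion that the augmentation ideal is the largest among all the ideals $I(x,y,s,t,\dots)$—to be the only step requiring genuine verification; the rest of the argument is formal.
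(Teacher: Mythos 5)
Your proof is correct and takes essentially the same route as the paper's one-line proof, which derives the corollary from Theorem~\ref{T1b} (equivalently Theorem~\ref{T1}) together with the inclusion order of the Appendix: a maximal ideal is primitive and nonzero, hence equals some $I(x,y,s,t,l_1,\dots,l_s,r_1,\dots,r_t)$, and every such ideal is contained in the augmentation ideal, which is itself maximal since $\Ue/I(0,0,0,0)\cong\mathbb F$. Your facts (i) and (ii) are the standard ring-theoretic ingredients the paper leaves implicit, and your verification of (iii) directly through Proposition~\ref{Pordz} and Theorem~\ref{Tiap} (taking $a=m$, $b=0$) is exactly the content the paper means by citing Theorem~\ref{Tord} --- indeed it is the safer route, since the inequalities in condition b) of Theorem~\ref{Tord} as printed (with $l_i-a\ge l'_{i+c}$ forcing $a=b=0$ when all $l'_i, r'_j$ vanish) appear to carry a sign slip that your appeal to Theorem~\ref{Tiap} bypasses.
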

\begin{proof} The statement  is implied by Theorems~\ref{T1b} and ~\ref{Tord}.\end{proof}
\section{Acknowledgements}
 We thank A.~Baranov for inroducing us to the work of A.~Zhilinskii several years ago. We also thank D.~Rumynin and S.~Sierra for discussions which led finally to the statement and proof of Lemma~\ref{Lalg}. In addition, we are grateful for A.~Fadeev for pointing out some some details in the proofs which required improvement. Both authors acknowledge partial support through DFG Grant~PE 980/6-1. The second author was also supported by Leverhulme Trust Grant RPG-2013-293, and thanks Jacobs University for its hospitality.

\end{document}